\providecommand{\U}[1]{\protect\rule{.1in}{.1in}}
\newtheorem {theorem}{Theorem}
\newtheorem {claim}[theorem]{Claim}
\newtheorem {conclusion}[theorem]{Conclusion}
\newtheorem {definition}[theorem]{Definition}
\newtheorem {lemma}[theorem]{Lemma}
\newtheorem {problem}[theorem]{Problem}
\newtheorem {proposition}[theorem]{Proposition}
\newtheorem {remark}[theorem]{Remark}
\newenvironment {proof}[1][Proof]{\noindent \textbf {#1.} }{\ \rule {0.5em}{0.5em}}
\newtheorem {rmk}{Remark}
\newtheorem {prf}{Proof}
\begin{document}

\title{\textbf{An analysis of the superiorization method via the principle of
concentration of measure}}
\author{Yair Censor\thanks{Corresponding author.}\\Department of Mathematics\\University of Haifa\\Mt.\ Carmel, Haifa 3498838, Israel\\(yair@math.haifa.ac.il)
\and Eliahu Levy\\Department of Mathematics\\Technion -- Israel Institute of Technology\\Technion City, Haifa 3200003, Israel\\(eliahu@math.technion.ac.il)}
\date{November 22, 2018. Revised: June 15, 2019.}
\maketitle

\begin{abstract}
The superiorization methodology is intended to work with input data of
constrained minimization problems, i.e., a target function and a constraints
set. However, it is based on an antipodal way of thinking to the thinking that
leads constrained minimization methods. Instead of adapting unconstrained
minimization algorithms to handling constraints, it adapts feasibility-seeking
algorithms to reduce (not necessarily minimize) target function values. This
is done while retaining the feasibility-seeking nature of the algorithm and
without paying a high computational price. A guarantee that the local target
function reduction steps properly accumulate to a global target function value
reduction is still missing in spite of an ever-growing body of publications
that supply evidence of the success of the superiorization method in various
problems. We propose an analysis based on the principle of concentration of
measure that attempts to alleviate the guarantee question of the
superiorization method.\medskip

\end{abstract}

\textbf{Keywords}: Superiorization, perturbation resilience,
feasibility-seeking algorithm, target function reduction, concentration of
measure, superiorization matrix,\ linear superiorization, Hilbert-Schmidt
norm, random matrix.

\section{Introduction}

\textbf{The superiorization method studied in this paper. }Let ${{\mathcal{H}%
}}$ be a $J$-dimensional Hilbert space, i.e.,\ the Euclidean space $E^{J}$
with norm $\Vert\cdot\Vert$ and inner product $\left\langle \cdot
,\cdot\right\rangle $, and consider the convex feasibility problem (CFP) which
is to find a point in the nonempty intersection $C$ of a finite number
$C_{1},C_{2},\ldots,C_{I}$ of closed convex sets in ${{\mathcal{H}}}$. Let
$(A_{t})_{t=1}^{\infty}$ be a sequence of operators $A_{t}:{{\mathcal{H}}%
}\rightarrow{{\mathcal{H}}}$ that gives rise to an iterative process which,
starting from an initial $x_{0}\in{{\mathcal{H}}},$ generates a sequence
$(x_{n})_{n}\subset{{\mathcal{H}}}$ by%
\begin{equation}
x_{n+1}:=A_{n+1}(x_{n}),\qquad n=0,1,2,\dots.\label{eq:basic-alg}%
\end{equation}
Further, assume that any sequence $(x_{n})_{n},$ generated by this process
converges, for any initial $x_{0}\in{{\mathcal{H}}},$ to some point
$x_{\infty}\in C.$ An algorithm\footnote{As common, we use the terms algorithm
or algorithmic structure for the iterative processes studied here although no
termination criteria are present and only the asymptotic behavior of these
processes is studied.} that employs such a process is called a
`feasibility-seeking algorithm'\ and will be, henceforth, referred to as a
`basic algorithm'.

Now, consider an iterative process that uses the same algorithmic operators
$(A_{t})_{t=1}^{\infty}$ but perturbs the iterates and generates another
sequence $(x_{n}^{\prime})_{n}\subset{{\mathcal{H}}}$ by%
\begin{equation}
x_{0}^{\prime}=x_{0},\text{ \ \ }x_{n+1}^{\prime}:=A_{n+1}(x_{n}^{\prime
}+\beta_{n}v_{n}),\qquad n=0,1,2,\dots, \label{eq:perturbed-alg}%
\end{equation}
where $v_{n}\in{{\mathcal{H}}}$ and $\beta_{n}$ are real numbers so that
$\Vert v_{n}\Vert\leq M$, are bounded by some $M$, and $\beta_{n}\geq0,$ for
all $n\geq0,$ and $\sum_{n=0}^{\infty}\beta_{n}<+\infty$. Assume that any
sequence $(x_{n}^{\prime})_{n},$ generated by this process, converges to some
point $x_{\infty}^{\prime}\in C.$ An algorithm that employs such a process is
called a `superiorized version of the basic algorithm'. Modifications of this
superiorized version of the basic algorithm have been developed, see, e.g.,
the Appendix, entitled: \textquotedblleft The algorithmic evolution of
superiorization\textquotedblright\ in \cite{linsup-ip17}, however, our current
investigation focuses solely on the above formulation.

The superiorization method (SM) considered here looks at basic algorithms of
the form (\ref{eq:basic-alg}) that are resilient to perturbations as those
that appear in (\ref{eq:perturbed-alg}) and aims at using inexpensive such
perturbations in order to reach (i.e., asymptotically converge to) a feasible
point in $C$ that is superior with respect to some given target function.
These notions are made precise in the next sections.

\textbf{Readings. }To a novice on the SM and perturbation resilience of
algorithms we recommend to read first the recent reviews in
\cite{constanta-weak-strong,herman-rev-14,med-phys-2012}. Current work on
superiorization can be appreciated from the continuously updated Internet page
\cite{sup-bib-online}. For a recent description of previous work that is
related to superiorization but is not included in \cite{sup-bib-online}, such
as the works of Sidky and Pan, e.g., \cite{sidky-pan}, we direct the reader to
\cite[Section 3]{compare2014}. The SNARK14 software package \cite{snark}, with
its in-built capability to superiorize iterative algorithms to improve their
performance, can be helpful to practitioners. Naturally, there is variability
among the bibliography items of \cite{sup-bib-online} in their degree of
relevance to the superiorization methodology and perturbation resilience of
algorithms. In some, superiorization does not appear in the title, abstract or
introduction but only inside the work, e.g., \cite[Subsection 6.2.1:
Optimization vs. Superiorization]{zhang}.

\textbf{A word about the history. }The terms and notions \textquotedblleft
superiorization\textquotedblright\ and \textquotedblleft perturbation
resilience\textquotedblright,\ in the present context, first appeared in the
2009 paper of Davidi, Herman and Censor \cite{dhc-itor-09} which followed its
2007 forerunner by Butnariu, Davidi, Herman and Kazantsev \cite{butnariu}. The
ideas have some of their roots in the 2006 and 2008 papers of Butnariu, Reich
and Zaslavski \cite{brz06,brz08}. All these culminated in Ran Davidi's 2010
PhD dissertation \cite{davidi-phd} and the many papers since then cited in
\cite{sup-bib-online}.

\textbf{The guarantee problem of the SM. }The SM interlaces into a
feasibility-seeking basic algorithm target function reduction steps. These
steps cause the target function to reach lower values locally, prior to
performing the next feasibility-seeking iterations. A mathematical guarantee
has not been found to date that the overall process of the superiorized
version of the basic algorithm will not only retain its feasibility-seeking
nature but also preserve globally the target function reductions. We call this
fundamental question of the SM \textquotedblleft the guarantee problem of the
SM\textquotedblright\ which is: \textquotedblleft under which conditions one
can guarantee that a superiorized version of a bounded perturbation resilient
feasibility-seeking algorithm converges to a feasible point that has target
function value smaller or equal to that of a point to which this algorithm
would have converged if no perturbations were applied -- everything else being
equal.\textquotedblright\ 

Numerous works that are cited in \cite{sup-bib-online} show that this global
function reduction of the SM occurs in practice in many real-world
applications. But until the guarantee problem of the SM is answered one
wonders if the SM is just a successful heuristic or if there is a mathematical
foundation for the accumulating reports on its performance success? Therefore,
answering the guarantee problem of the SM is an intriguing issue, which to our
knowledge, has not been discussed in the literature in any way.

\textbf{Concentration of measure. }Concentration of measure (about a median)
is a principle that is applied in measure theory, probability and
combinatorics, and has consequences for other fields such as Banach space
theory. Informally, it states that \textquotedblleft A random variable that
depends in a Lipschitz way on many independent variables (but not too much on
any of them) is essentially constant\textquotedblright, \cite{talagrand96}.

The concentration of measure phenomenon was put forth in the early 1970s by
Vitali Milman in his works on the local theory of Banach spaces, extending an
idea going back to the work of Paul L\'{e}vy, as noted in \cite{gromov}. It
was further developed in the works of Milman and Gromov, Maurey, Pisier,
Schechtman, Talagrand \cite{talagrand96}, Ledoux \cite{Ledoux}, and others.

\textbf{Contribution and structure of this paper. }We offer an analysis of the
guarantee problem of the SM via the principle of concentration of measure.
This approach raises though some further questions but it is a first step
toward explaining why the SM works. In Section \ref{sec:background} we
elaborate on the SM while in Section \ref{sect:sm} we describe it in detail
and offer a layout of the situation in \textquotedblleft
matrix\textquotedblright\ form via an infinite lower triangular matrix called
the superiorization matrix. In Section \ref{sbsc:ConcMeas} we present a brief
primer on the principle of concentration of measure with which we intend to
analyze the behavior of the SM. The special case of linear superiorization
(LinSup) is discussed in Section \ref{sect:linsup}. A pathway to the nonlinear
case is discussed in Section \ref{sect:nonlinear}, followed by some concluding
remarks in Section \ref{sect:concluding}. Technical results that support and
enable the analysis are presented in the Appendices A.1--A.7 at the end of the paper.

\section{Background of the superiorization methodology\label{sec:background}}

\textbf{The superiorization methodology}. To answer in a succinct manner the
question \textquotedblleft what is the superiorization
methodology?\textquotedblright\ the next three paragraphs are quoted from our
preface to the special issue \textquotedblleft Superiorization: Theory and
Applications\textquotedblright\ \cite{special-issue}:

\textquotedblleft The superiorization methodology (SM) is used for improving
the efficacy of iterative algorithms whose convergence is resilient to certain
kinds of perturbations. Such perturbations are designed to `force' the
perturbed algorithm to produce more useful results for the intended
application than the ones that are produced by the original iterative
algorithm. The perturbed algorithm is called the `superiorized version' of the
original unperturbed algorithm. When the original algorithm is computationally
efficient and useful in terms of the application at hand and if the
perturbations are simple and not expensive to calculate, then the advantage of
this method is that, for essentially the computational cost of the original
algorithm, we are able to get something more desirable by steering its
iterates according to the designed perturbations. This is a very general
principle that has been used successfully in some important practical
applications, especially for inverse problems such as image reconstruction
from projections, intensity-modulated radiation therapy and nondestructive
testing, and awaits to be implemented and tested in additional fields.

An important case is when the original algorithm is `feasibility-seeking' (in
the sense that it strives to find some point that is compatible with a family
of constraints) and the perturbations that are introduced into the original
iterative algorithm aim at reducing (not necessarily minimizing) a given merit
function. In this case, superiorization has a unique place in optimization
theory and practice. Many constrained optimization methods are based on
methods for unconstrained optimization that are adapted to deal with
constraints. Such is, for example, the class of projected gradient methods
wherein the unconstrained minimization inner step `leads' the process and a
projection onto the whole constraint set (the feasible set) is performed after
each minimization step in order to regain feasibility. This projection onto
the entire constraints set is in itself a non-trivial optimization problem and
the need to solve it in every iteration hinders projected gradient methods and
restricts their efficiency only to feasible sets that are `simple to project
onto.' Barrier or penalty methods likewise are based on unconstrained
optimization combined with various `add-on's that guarantee that the
constraints are preserved. Regularization methods embed the constraints into a
`regularized' objective function and proceed with unconstrained solution
methods for the new regularized objective function.

In contrast to these approaches, the superiorization methodology can be viewed
as an antipodal way of thinking. Instead of adapting unconstrained
minimization algorithms to handling constraints, it adapts feasibility-seeking
algorithms to reduce merit function values. This is done while retaining the
feasibility-seeking nature of the algorithm and without paying a high
computational price. Furthermore, general-purpose approaches have been
developed for automatically superiorizing iterative algorithms for large
classes of constraints sets and merit functions; these provide algorithms for
many application tasks.\textquotedblright\ (end of quote.)

\textbf{Usefulness of the approach.} The usefulness of the SM relies on two
features: (i) \textbf{Computational}: feasibility-seeking is logically a
less-demanding task than seeking a constrained minimization point in a
feasible set. Therefore, letting efficient feasibility-seeking algorithms
\textquotedblleft lead\textquotedblright\ the algorithmic effort and modifying
them with inexpensive add-ons works well in practice. (ii)
\textbf{Applicational}: in some significant real-world applications the choice
of a target function is exogenous to the modeling and data collection which
give rise to the constraints. In such situations the limited confidence in the
usefulness of a chosen target function leads often to the recognition that,
from the application-at-hand point of view, there is no need, neither a
justification, to search for an exact constrained minimum\footnote{Some
support for this reasoning may be borrowed from the American scientist and
Noble-laureate Herbert Simon who was in favor of \textquotedblleft
satisficing\textquotedblright\ rather than \textquotedblleft
maximizing\textquotedblright. Satisficing is a decision-making strategy that
aims for a satisfactory or adequate result, rather than the optimal solution.
This is because aiming for the optimal solution may necessitate needless
expenditure of time, energy and resources. The term \textquotedblleft
satisfice\textquotedblright\ was coined by Herbert Simon in 1956 \cite{simon},
see: https://en.wikipedia.org/wiki/Satisficing.}. For obtaining
\textquotedblleft good results\textquotedblright,\ evaluated by how well they
serve the task of the application at hand, it is often enough to find a
feasible point that has reduced (not necessarily minimal) target function value.

\textbf{Weak superiorization\ and strong superiorization}. It is worthwhile to
note here that there are two research directions in the general area of the
superiorization methodology. One is the direction when only bounded
perturbation resilience is used and the constraints are assumed to be
consistent (having nonempty intersection). Then, one treats the
\textquotedblleft superiorized version\textquotedblright\ of the original
unperturbed basic algorithm actually as a recursion formula that produces an
infinite sequence of iterates, and convergence questions are meant in their
asymptotic nature. This is the framework in which we work in this paper. The
second direction does not assume consistency of the constraints but uses
instead a proximity function that \textquotedblleft measures\textquotedblright%
\ the violation of the constraints. Instead of seeking asymptotic feasibility,
it looks at $\varepsilon$-compatibility with $C$ and uses the notion of
\textquotedblleft strong perturbation resilience\textquotedblright, see
\cite[Subsection II.C]{med-phys-2012} where this direction has been initiated.
The same core \textquotedblleft superiorized version\textquotedblright\ of the
original unperturbed algorithm might be investigated in each of these
directions, but the second is the more useful one for practical applications,
whereas the first makes only asymptotic statements. \ The terms
\textquotedblleft weak superiorization\textquotedblright\ and
\textquotedblleft strong superiorization\textquotedblright\ were proposed as a
nomenclature for the first and second directions, respectively, in
\cite[Section 6]{cz3-2015} and \cite{constanta-weak-strong}. We do not discuss
here the latter, therefore, whenever we say superiorization in the sequel we
mean weak superiorization.

\section{The guarantee problem of the superiorization
methodology\label{sect:sm}}

In order to consider basic algorithms of the form (\ref{eq:basic-alg}) that
are resilient to perturbations as those that appear in (\ref{eq:perturbed-alg}%
) formally, the following definition is used, see, e.g., \cite[Definition
1]{cdh10}, where it was formulated for a single algorithmic operator, i.e.,
$A_{t}=A$ for all $t\geq0$.

\begin{definition}
\label{definition-BPR}\textbf{\textit{Bounded Perturbation Resilience (BPR)}}
Given a sequence of operators $A_{t}:{{\mathcal{H}}}\rightarrow{{\mathcal{H}}%
},$ for all $t\geq0,$ an algorithm as in (\ref{eq:basic-alg}) is said to be
\texttt{bounded perturbations resilient} if the following holds: If the
algorithm (\ref{eq:basic-alg}) generates sequences $(x_{n})_{n}$ that converge
to points in $C$ for all $x_{0}\in{{\mathcal{H}}},$ then any sequence
$(x_{n}^{\prime})_{n}$, generated by (\ref{eq:perturbed-alg}) where the vector
sequence $(v_{n})_{n}$ is bounded, $\beta_{n}\geq0$ for all $n\geq0$, and
${\sum_{n=0}^{\infty}}\beta_{n}<+\infty,$ also converges to a point in $C$ for
any $x_{0}^{\prime}\in{{\mathcal{H}}}$.
\end{definition}

These notions appear in earlier papers on the SM, see, e.g.,
\cite{cdh10,dhc-itor-09,compare2014,med-phys-2012}.

In addition to the basic algorithm and its superiorized version we consider in
the SM a target function $\phi:\Delta\rightarrow E$, whose domain
$\Delta\subseteq E^{J}$ contains the feasible set $C,$ and we adopt the
convention that a point in $\Delta$ for which the value of $\phi$ is smaller
is considered \textit{superior} to a point in $\Delta$ for which the value of
$\phi$ is larger. The essential idea of the SM is to make use of the
perturbations of (\ref{eq:perturbed-alg}) to transform a perturbation
resilient algorithm that seeks a feasible solution (the basic algorithm) into
its superiorized version whose outputs are equally good from the point of view
of feasibility-seeking, but are superior (not necessarily optimal) with
respect to the target function $\phi$.

The SM, which works well in numerous numerical applications (consult
\cite{sup-bib-online}), consists of choosing the perturbation vectors $v_{n}$
in (\ref{eq:perturbed-alg}) as directions of nonascent of $\phi$ in the
superiorized version of the basic algorithm. With the above information we
formulate the guarantee problem of the SM.

\begin{problem}
\textbf{The guarantee problem of weak superiorization}

The \texttt{guarantee problem of the weak superiorization method}, discussed
here, is the following question: Can we provably guarantee, maybe under some
assumptions, that for a given nonempty constraints set $C$ of a CFP and a
target function $\phi:\Delta\rightarrow E$ such that $C\subseteq\Delta$ we
will have $\phi(x_{\infty}^{\prime})\leq\phi(x_{\infty})$ for the limits
$x_{\infty}$ and $x_{\infty}^{\prime}$ of sequences $(x_{n})_{n}$ and
$(x_{n}^{\prime})_{n}$ generated by the basic algorithm (\ref{eq:basic-alg})
and its superiorized version (\ref{eq:perturbed-alg}), respectively, both
initiated at the same starting point $x_{0}=x_{0}^{\prime}$?
\end{problem}

To the best of our knowledge, this has not been answered in any way. The only
result in this direction is the attempt to investigate the behavior of a
superiorized version of a basic feasibility-seeking algorithm done by us in
\cite{cz3-2015}. The main result there (Theorem 4.1 in \cite{cz3-2015})
establishes a mathematical basis for the behavior of the SM when dealing with
input data of constrained minimization problems, i.e., a target function and a
constraints set. In particular, a feasible region that is the intersection of
finitely many closed convex constraint sets is assumed. The dynamic
string-averaging projection (DSAP) method, with variable strings and variable
weights, is playing there the role of a feasibility-seeking algorithm, which
is indeed bounded perturbations resilient. The bounded perturbations
resilience of the DSAP method has been proved in \cite{cz-2013} and it is
worthwhile to note that the DSAP is an algorithmic scheme that includes
several well-known specific feasibility-seeking algorithms as special cases.
These include, but are not limited to, the sequential Kaczmarz projections
method and the simultaneous Cimmino projections method, see, e.g.,
\cite{cegielski-book}.

Theorem 4.1 in \cite{cz3-2015} says that any sequence, generated by the
superiorized version of a DSAP algorithm (Algorithm 4.1. there), will not only
converge to a feasible point, a fact which is due to the bounded perturbations
resilience of the DSAP method, but, additionally, that exactly one of two
alternatives must hold. Either its limit point will solve the constrained
minimization problem of the same data, or that the sequence is strictly
Fej\'{e}r monotone with respect to (i.e., gets strictly closer to the points
of) a subset of the solution set of the constrained minimization problem of
the same data. But Fej\'{e}r monotonicity, even if strict, does not yield
convergence to a point in the set with respect to which the sequence is
strictly Fej\'{e}r monotone. So, this result shows that one gets closer to a
subset of the solution set of the constrained minimization problem but it
falls short of proving the convergence toward such a set.

The superiorization method uses input data consisting of a constraints set $C$
which is the intersection of several individual sets $C_{1},C_{2},\ldots
,C_{I}$ and a target function $\phi$. Feasibility-seeking with a sequential
projections basic algorithm will lead asymptotically to a feasible point
$x^{\ast}.$ Perturbations via interlaced local moves in the negative gradient
direction will not prevent the process from converging to a feasible point if
the basic feasibility-seeking algorithm is bounded perturbations resilient.
Convergence of the superiorized algorithm to any superior feasible point is
the subject of the \textquotedblleft guarantee problem of SM\textquotedblright%
\ discussed in this paper. Any superior feasible point has a target function
value $\phi$ that is lower than that of the feasible point $x^{\ast}$ which is
reached (asymptotically) by the same basic feasibility-seeking algorithm
without any interlaced perturbations -- everything else in the implementation,
such as relaxation parameters, initialization point, ordering of the
individual sets that are projected on, etc. -- being equal.

\subsection{A layout of the SM as a matrix\ of elements\label{sbsc:Matrix}}

We will use the following definitions.

\begin{definition}
Let $A:{\mathcal{H}}\rightarrow{\mathcal{H}}$ be an operator and let
$D\subset{\mathcal{H}}.$

(i) The operator $A$ is called \texttt{nonexpansive} on $D$ if
\begin{equation}
\Vert A(x)-A(y)\Vert\leq\Vert x-y\Vert,{\text{\ for all\ }}x,y\in D.
\end{equation}

(ii) The operator $A$ is called \texttt{monotone} on $D$ if
\begin{equation}
\langle y-x,Ay-Ax\rangle\geq0,{\text{\ for all\ }}x,y\in D.
\end{equation}

\end{definition}

These definitions describe the action of $A$ on a pair $x,y$ compared with the
original pair: Nonexpansive operators do not make the pair \textquotedblleft
further apart\textquotedblright, while monotone operators \textquotedblleft do
not rotate it in more than $90$ degrees.\textquotedblright\ A linear
orthogonal projection is nonexpansive and monotone, any linear operator with
norm $\leq1$ is nonexpansive while any linear operator whose symmetric part is
positive definite is monotone. Also, the nearest point projection on a closed
convex set is nonexpansive and monotone, see, e.g., \cite[Example
20.12]{BC-book}. To facilitate our analysis we define an infinite lower
triangular matrix\ of elements of ${{\mathcal{H}}}$ and name it \textit{the
Superiorization Matrix}.

\begin{definition}
\label{def:sm-matrix}(\textbf{The Superiorization Matrix}) Let $(A_{t}%
)_{t=1}^{\infty},$ $(x_{n})_{n},$ $(v_{n})_{n},$ and $(\beta_{n})_{n}$ be as
in the previous section above. Define $\left(  {{\mathcal{M}}}(n,k)\right)
_{n=0,k=0}^{\infty,\infty}$, an infinite lower triangular matrix of elements
of ${{\mathcal{H}}}$ as follows:

(1) In the upper left corner define an arbitrary vector in ${{\mathcal{M}}%
}(0,0):=x_{0}=x_{0}^{\prime}\in{{\mathcal{H}}}$.

(2) Construct the $n$-th row from the $(n-1)$-th row by applying $A_{n}$, in
each column $k$, to the entry above it in the $(n-1)$-th row in that column:
\begin{equation}
{{\mathcal{M}}}(n,k):=A_{n}({{\mathcal{M}}}(n-1,k)),\qquad k=0,1,\ldots,n.
\end{equation}

(3) Additionally, add for each $n\geq0,$ the $(n,n+1)$-th entry which is
obtained from the $(n,n)$-th entry by adding to it $\beta_{n}v_{n}$:
\begin{equation}
{{\mathcal{M}}}(n,n+1):={{\mathcal{M}}}(n,n)+\beta_{n}v_{n}.
\end{equation}

\end{definition}

The superiorization matrix can be described in the following form:\bigskip%
\begin{equation}
\left[
\begin{array}
[c]{cccccc}
& {\tiny 0} & {\tiny 1} & {\tiny 2} & {\tiny \cdots} & {\tiny n}\\
{\tiny 0} & {\tiny x}_{0}={\tiny x}_{0}^{\prime} & {\tiny x}_{0}{\tiny +\beta
}_{0}{\tiny v}_{0} &  &  & \\
{\tiny 1} & {\tiny x}_{1}{\tiny =A}_{1}{\tiny (x}_{0}{\tiny )} & {\tiny x}%
_{1}^{\prime}{\tiny =A}_{1}{\tiny (x}_{0}{\tiny +\beta}_{0}{\tiny v}%
_{0}{\tiny )} & {\tiny x}_{1}^{\prime}{\tiny +\beta}_{1}{\tiny v}_{1} &  & \\
{\tiny 2} & {\tiny x}_{2}{\tiny =A}_{2}{\tiny (x}_{1}{\tiny )} & {\tiny A}%
_{2}{\tiny (x}_{1}^{\prime}{\tiny )} & {\tiny x}_{2}^{\prime}{\tiny =A}%
_{2}{\tiny (x}_{1}^{\prime}{\tiny +\beta}_{1}{\tiny v}_{1}{\tiny )} &  & \\
{\tiny \vdots} & {\tiny \vdots} & {\tiny \vdots} & {\tiny \vdots} &
{\tiny \ddots} & {\tiny x}_{n-1}^{\prime}{\tiny +\beta}_{n-1}{\tiny v}_{n-1}\\
{\tiny n} & {\tiny x}_{n}{\tiny =A}_{n}{\tiny (x}_{n-1}{\tiny )} &
{\tiny \cdots} & {\tiny \cdots} & {\tiny \cdots} & {\tiny x}_{n}^{\prime
}{\tiny =A}_{n}{\tiny (x}_{n-1}^{\prime}{\tiny +\beta}_{n-1}{\tiny v}%
_{n-1}{\tiny )}\\
{\tiny \vdots} & \boldsymbol{\downarrow} & \boldsymbol{\downarrow} &
\boldsymbol{\downarrow} & {\tiny \cdots} & \boldsymbol{\downarrow}\\
& {\tiny x}_{\infty,0}{\tiny =x}_{\infty}{\tiny \in C} & {\tiny x}_{\infty
,1}{\tiny \in C} & {\tiny x}_{\infty,2}{\tiny \in C} & {\tiny \cdots} &
{\tiny x}_{\infty,n}{\tiny \in C}%
\end{array}
\right]  . \label{eq:sup-matrix}%
\end{equation}

The upper-most row and left-hand side column include the column and row
indices of the matrix, respectively. The bottom-row is not part of the matrix
either but depicts the limits of the sequences of each column. Only the first
$n$ columns are depicted but the matrix has infinitely many columns as well as
infinitely many rows. The sequence in the $0$-th column is generated by the
basic algorithm, thus, converges to $x_{\infty,0}=x_{\infty}\in C,$ while the
sequence of the main diagonal elements of the matrix are the iterates
generated by the superiorized version of the basic algorithm which, therefore,
converges to $x_{\infty}^{\prime}.$

This matrix representation of the SM is new and has never been published
before. We consider it an indispensable tool in analyzing the progress of
iterative sequences generated by the SM.

\begin{lemma}
\textit{The Superiorization Matrix} of Definition \ref{def:sm-matrix} has the
following properties:

(i) For all $n\geq0$, ${{\mathcal{M}}}(n,0)=x_{n}$.

(ii) The infinite sequence of the elements in the $k$-th column, for each
$k\geq0,$ converges to a point $x_{\infty,k}\in C.$ Observe that $x_{\infty
,0}=x_{\infty}.$

(iii) The diagonal elements of the matrix are ${{\mathcal{M}}}(n,n)=x_{n}%
^{\prime}$, thus, if bounded perturbations resilience holds then the infinite
sequence of the elements along the main diagonal of the superiorization matrix
will converge to $x_{\infty}^{\prime}.$

(iv) For a target function $\phi$ whose domain contains the feasible set $C$
\begin{equation}
\phi(x_{n})-\phi(x_{n}^{\prime})=\sum_{k=1}^{n}\phi({{\mathcal{M}}%
}(n,k-1))-\phi({{\mathcal{M}}}(n,k)),{\text{ For all }}n\geq0. \label{eq:phi}%
\end{equation}

\end{lemma}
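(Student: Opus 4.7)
The four claims follow essentially by induction from the recursive construction of the matrix, together with the hypotheses on the basic algorithm and on its bounded perturbations resilient superiorized version. My plan is to treat (i) and (iii) by parallel inductive arguments, then deduce (ii) from the convergence assumption applied column-by-column, and finish (iv) by telescoping using (i) and (iii).

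For (i), I would argue by induction on $n$. The base case ${{\mathcal{M}}}(0,0)=x_0$ is built into rule (1) of Definition~\ref{def:sm-matrix}. For the inductive step, rule~(2) with $k=0$ gives ${{\mathcal{M}}}(n,0)=A_n({{\mathcal{M}}}(n-1,0))=A_n(x_{n-1})=x_n$, where the second equality uses the inductive hypothesis and the third uses the basic algorithm recursion \eqref{eq:basic-alg}. For (iii), the parallel induction uses both rules~(2) and~(3): ${{\mathcal{M}}}(0,0)=x_0=x_0'$, and assuming ${{\mathcal{M}}}(n-1,n-1)=x_{n-1}'$, rule~(3) yields ${{\mathcal{M}}}(n-1,n)=x_{n-1}'+\beta_{n-1}v_{n-1}$, after which rule~(2) gives ${{\mathcal{M}}}(n,n)=A_n(x_{n-1}'+\beta_{n-1}v_{n-1})=x_n'$ by \eqref{eq:perturbed-alg}. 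The convergence to $x_\infty'\in C$ then follows directly from Definition~\ref{definition-BPR}, since the perturbation sequence $(\beta_n v_n)_n$ satisfies the hypotheses of bounded perturbations resilience.

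For (ii), the idea is that the sub-column $({{\mathcal{M}}}(n,k))_{n\geq k-1}$ (respectively $n\geq 0$ if $k=0$) is itself an orbit of the operators $A_{k},A_{k+1},\ldots$ starting from the vector ${{\mathcal{M}}}(k-1,k)=x_{k-1}'+\beta_{k-1}v_{k-1}\in{{\mathcal{H}}}$, obtained by iterating rule~(2). Invoking the standing hypothesis that the basic algorithmic process converges to a point of $C$ for every initial vector in ${{\mathcal{H}}}$, this sub-column converges to some $x_{\infty,k}\in C$; the $k=0$ case collapses to $x_{\infty,0}=x_\infty$ because of~(i). The main obstacle here is conceptual rather than technical: the convergence hypothesis was stated for the process started at index $0$ with operator $A_1$, whereas column~$k$ begins at index $k-1$ with operator $A_k$. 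I would either invoke the convergence hypothesis in its natural shift-invariant reading (as is standard when the $A_t$ arise, e.g., from cyclic control), or state this uniformity explicitly as part of the assumption used in the lemma.

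Finally, (iv) is purely algebraic: by (i) and (iii), $\phi(x_n)-\phi(x_n')=\phi({{\mathcal{M}}}(n,0))-\phi({{\mathcal{M}}}(n,n))$, and telescoping the right-hand side as $\sum_{k=1}^{n}\bigl[\phi({{\mathcal{M}}}(n,k-1))-\phi({{\mathcal{M}}}(n,k))\bigr]$ yields \eqref{eq:phi}. No hypothesis on $\phi$ beyond being defined at the relevant entries ${{\mathcal{M}}}(n,k)$ is needed; since each ${{\mathcal{M}}}(n,k)$ lies in the orbit of some $A_\ell$'s, one should, in passing, note that these iterates remain in $\Delta$ (the domain of $\phi$) so that the expression in \eqref{eq:phi} makes sense.
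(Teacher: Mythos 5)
Your proof is correct and follows essentially the same route as the paper's: induction from the construction rules for (i) and (iii), the observation that each column is eventually an unperturbed orbit for (ii), and telescoping along the $n$-th row for (iv). Your caveat in (ii) --- that the convergence hypothesis must be read as applying to the shifted operator sequence $(A_t)_{t\geq k}$ started at ${{\mathcal{M}}}(k-1,k)$, not just to orbits of $(A_t)_{t\geq 1}$ --- is a point the paper itself glosses over and is worth making explicit.
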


\begin{proof}
(i) This follows from the definition. It means that the infinite sequence of
all elements in the $0$-th column constitute the sequence $(x_{n})_{n}$
generated by the basic (feasibility-seeking) algorithm.

(ii) This is so because in each column only a finite number of initial
elements are perturbed and from one point onward the operators are applied
without further perturbations. Recall that we assumed that any sequence
$(x_{n})_{n},$ generated by the basic algorithm (\ref{eq:basic-alg}),
converges to a point in $C,$ for any initial point$.$

(iii) This follows by induction since ${{\mathcal{M}}}(0,1)=x_{0}+\beta
_{0}v_{0}$; ${{\mathcal{M}}}(1,1)=A_{1}(x_{0}+\beta_{0}v_{0})=x_{1}^{\prime}$;
${{\mathcal{M}}}(1,2)=x_{1}^{\prime}+\beta_{1}v_{1}$; ${{\mathcal{M}}%
}(2,2)=A_{2}(x_{1}^{\prime}+\beta_{1}v_{1})=x_{2}^{\prime}$, and so on.

(iv) This follows from $\phi(x_{n})=\phi({{\mathcal{M}}}(n,0))$ and
$\phi(x_{n}^{\prime})=\phi({{\mathcal{M}}}(n,n))$ by going along the $n$-th
row of the matrix.
\end{proof}

The following lemma states that if a basic algorithm like (\ref{eq:basic-alg})
always converges to a point in $C$ and if the operators $A_{t}:{{\mathcal{H}}%
}\rightarrow{{\mathcal{H}}},$ for all $t\geq0,$ are nonexpansive, then the
superiorized version of the basic algorithm also converges to a point in $C$.

\begin{lemma}
Let $(A_{t})_{t=1}^{\infty}$ be a sequence of operators $A_{t}:{{\mathcal{H}}%
}\rightarrow{{\mathcal{H}}}$ that gives rise to an iterative process which,
starting from any initial $x_{0}\in{{\mathcal{H}}},$ generates a sequence
$(x_{n})_{n}\subset{{\mathcal{H}}}$ by (\ref{eq:basic-alg}) and assume that
any sequence $(x_{n})_{n},$ generated by this process, converges to some point
$x_{\infty}\in C.$ If the operators $(A_{t})_{t=1}^{\infty}$ are nonexpansive
and if $v_{n}\in{{\mathcal{H}}}$ and $\beta_{n}$ are real numbers so that
$\Vert v_{n}\Vert\leq M$, $\beta_{n}\geq0,$ for all $n\geq0,$ and $\sum
_{n=0}^{\infty}\beta_{n}<+\infty$ then the algorithm (\ref{eq:basic-alg}) is
bounded perturbations resilient.
\end{lemma}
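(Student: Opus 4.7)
The plan is to exploit the superiorization matrix structure together with nonexpansivity to compare neighboring columns, and then run a standard three-term $\varepsilon$-argument along the diagonal.

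First I would observe the following key estimate. Fix $k\ge 0$. At row $k$ the only difference between columns $k$ and $k+1$ is the injected perturbation, so $\|{{\mathcal{M}}}(k,k)-{{\mathcal{M}}}(k,k+1)\|=\beta_k\|v_k\|\le M\beta_k$. For every $n>k$ both entries ${{\mathcal{M}}}(n,k)$ and ${{\mathcal{M}}}(n,k+1)$ are obtained from the entries above by the same operator $A_n$, so nonexpansivity of $A_n$ propagates the estimate downward:
\begin{equation}
\|{{\mathcal{M}}}(n,k)-{{\mathcal{M}}}(n,k+1)\|\le \|{{\mathcal{M}}}(k,k)-{{\mathcal{M}}}(k,k+1)\|\le M\beta_k,\qquad n\ge k.
\end{equation}
Summing over consecutive columns yields, for every $0\le k\le m\le n$,
\begin{equation}
\|{{\mathcal{M}}}(n,k)-{{\mathcal{M}}}(n,m)\|\le M\sum_{j=k}^{m-1}\beta_j.
\end{equation}

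Second, I would pass to limits in $n$ column by column. By the previous lemma, each column $k$ is, after row $k$, a sequence generated by the basic algorithm (starting from ${{\mathcal{M}}}(k,k+1)$ and applying $A_{k+1},A_{k+2},\ldots$), hence converges to some limit $x_{\infty,k}\in C$. Letting $n\to\infty$ in the column-to-column estimate gives $\|x_{\infty,k}-x_{\infty,k+1}\|\le M\beta_k$, so by $\sum_{k=0}^{\infty}\beta_k<\infty$ the sequence $(x_{\infty,k})_k$ is Cauchy; denote its limit by $y$. Since each $x_{\infty,k}\in C$ and $C$ is closed (being a finite intersection of closed convex sets), we have $y\in C$.

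Third, I would show that the diagonal ${{\mathcal{M}}}(n,n)=x_n'$ converges to the same $y$ by a three-term split. Given $\varepsilon>0$, pick $K$ large enough that $M\sum_{j=K}^{\infty}\beta_j<\varepsilon/3$ and $\|x_{\infty,K}-y\|<\varepsilon/3$, and then choose $N\ge K$ with $\|{{\mathcal{M}}}(n,K)-x_{\infty,K}\|<\varepsilon/3$ for $n\ge N$ (using that column $K$ converges to $x_{\infty,K}$). For $n\ge N$,
\begin{equation}
\|x_n'-y\|\le \|{{\mathcal{M}}}(n,n)-{{\mathcal{M}}}(n,K)\|+\|{{\mathcal{M}}}(n,K)-x_{\infty,K}\|+\|x_{\infty,K}-y\|<\varepsilon,
\end{equation}
where the first term is bounded using the column-to-column estimate. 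Hence $x_n'\to y\in C$, which is exactly bounded perturbations resilience.

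There is no real obstacle here beyond bookkeeping; nonexpansivity does all the heavy lifting by contracting column differences that were created by a single finite perturbation, and summability of $(\beta_n)$ turns finitely many such perturbations into a Cauchy sequence of column limits. The one place to be careful is the $\varepsilon/3$ argument, since one cannot simply interchange the limits $n\to\infty$ and $k\to\infty$ along the diagonal; the uniform-in-$n$ bound $M\sum_{j\ge K}\beta_j$ on $\|{{\mathcal{M}}}(n,n)-{{\mathcal{M}}}(n,K)\|$ is precisely what legitimizes the interchange.
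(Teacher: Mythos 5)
Your proposal is correct and follows essentially the same route as the paper's proof: the same column-to-column estimate $\Vert{{\mathcal{M}}}(n,k)-{{\mathcal{M}}}(n,k+1)\Vert\leq M\beta_{k}$ propagated downward by nonexpansivity, the same Cauchy sequence of column limits, and the same comparison of the diagonal with a fixed column. The only cosmetic differences are that you keep the bound $M$ rather than normalizing $\Vert v_{n}\Vert=1$ and you phrase the final step as an explicit $\varepsilon/3$ argument instead of the paper's bound on limit points, which is a slightly cleaner write-up of the identical idea.
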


\begin{proof}
In each column, from some row downward, only consecutive applications of the
operators $A_{t}$ occur. Therefore, since any sequence generated by the basic
algorithm always converges, every column $k$ converges to some limit
$x_{\infty,k}$ $\in C$. Dividing all $v_{n}$ by their norms we can assume,
without loss of generality, that $\Vert v_{n}\Vert=1,$ for all $n\geq0.$ Thus,
looking at the main diagonal entries, we have
\begin{equation}
\Vert{{\mathcal{M}}}(k,k+1)-{{\mathcal{M}}}(k,k)\Vert=\Vert\beta_{k}v_{k}%
\Vert=\beta_{k},
\end{equation}
and, due to the nonepansiveness of the operators, all distances between any
other pair of horizontally-neighboring entries in the $s$-th and $(s+1$)-th
columns are smaller or equal $\beta_{s}$, hence, we have also that neighboring
column limits are close, i.e., $\Vert x_{\infty,s+1}-x_{\infty,s}\Vert
\leq\beta_{s}$ for all $s\geq0$. Therefore, the sequence $(x_{\infty,k})_{k}$
of all limits of the columns is a Cauchy sequence of elements in $C$, which
will converge to some $x_{\infty,\infty}\in C$. To show that the latter is the
limit of $(x_{n})_{n}$, observe that for every row $n$ and for all $k<n$, we
have
\begin{equation}
\Vert x_{n}^{\prime}-{{\mathcal{M}}}(n,k)\Vert=\Vert{{\mathcal{M}}%
}(n,n)-{{\mathcal{M}}}(n,k)\Vert\leq\sum_{s=k}^{n}\beta_{s}\leq\sum
_{s=k}^{\infty}\beta_{s}.
\end{equation}
Since $\lim_{n\rightarrow\infty}{{\mathcal{M}}}(n,k)=x_{\infty,k}$, the
distance of any limit point of $(x_{n}^{\prime})$ from $x_{\infty,k}$ must be
smaller or equal $\sum_{s=k}^{\infty}\beta_{s}$, hence, has distance smaller
or equal $2\sum_{s=k}^{\infty}\beta_{s}$ from $x_{\infty,\infty}$. But
$\lim_{k\rightarrow\infty}\left(  2\sum_{s=k}^{\infty}\beta_{s}\right)  =$ $0$
which yields $\lim_{n\rightarrow\infty}x_{n}^{\prime}=x_{\infty,\infty}$.
\end{proof}

This lemma should be compared with Theorem 1 of \cite{med-phys-2012}. The
latter makes more assumptions and proves strong perturbation resilience, not
only bounded perturbation resilience. So, these two results complement each other.

\section{Concentration of measure\label{sbsc:ConcMeas}}

The phenomenon of \textit{concentration of measure} is the fact that, in some
important cases of random variables, it turns out that with almost full
probability the random variable is very close to its expectation, aka mean.
For example, a classical case of concentration of measure is the Law of Large
Numbers, see, e.g., \cite{seneta}, combined with the Central Limit Theorem of
probability theory, which describe how with almost full probability a
\textit{sum of many independent random variables} is concentrated near its
mean. Moreover, the distribution of the sum is almost normal. The literature
on this topic is wide and varied, see, e.g.,
\cite{com-book,conv-concent-book,samson}.

To explain the principle of concentration of measure in a manner that is
appropriate for our needs, we focus on a case, featuring in high-dimensional
Euclidean spaces,\emph{ }i.e., $E^{N}$ with the Euclidean norm $\Vert
\cdot\Vert$ and inner product $\langle\cdot,\cdot\rangle$ -- the
$N$-dimensional real Hilbert space. There, for uniform probability in its unit
sphere $S^{N-1}:=\{u\in E^{N}\,|\,\Vert u\Vert=1\}$, almost the whole mass
concentrates near the equator. In other words, for randomly given two vectors,
with almost full probability the angle between them is near $90^{\circ}$.

More precisely, fix a unit vector $u_{0}\in S^{N-1},$ e.g., $u_{0}%
=(1,0,\ldots,0)$. For $u\in S^{N-1}$, let $\alpha$ be the latitude relative to
$u_{0},$ i.e., the angle between $u$ and the hyperplane orthogonal to $u_{0}$,
so, $-\frac{1}{2}\pi\leq\alpha\leq\frac{1}{2}\pi$. The uniform measure in
$S^{N-1}$ can be \textquotedblleft disintegrated\textquotedblright\ along
$\alpha$, to levels $\alpha=const.$ which are translates of $\cos\alpha\cdot
S^{N-2}$. Therefore, if we denote by $d\omega=d\omega_{N-2}$ the uniform (say,
normalized to be probability) measure on $S^{N-2}$ then the uniform measure on
$S^{N-1}$ will be
\begin{equation}
K\cdot(\cos\alpha)^{N-2}\,d\omega\,d\alpha,
\end{equation}
where $K$ is a normalizing constant. For $N$ large, $(\cos\alpha)^{N-2}$ has a
steep peak near $\alpha=0$, thus, almost the whole mass is concentrated there.
Indeed, for $\alpha$ small, which will, thus, be the significant case,
\begin{equation}
(\cos\alpha)^{N-2}\approx(1-\frac{1}{2}\alpha^{2})^{N-2}\approx
\operatorname{exp}(-\frac{1}{2}N\alpha^{2}),
\end{equation}

i.e., the distribution of $\alpha$, for big $N$, is very close to normal
distribution with standard deviation $1/\sqrt{N}$.

The concentration of measure principle may be derived also in an alternative
way, where the uniform distribution on the sphere $S^{N-1}$ is treated, very
usefully, as follows. Take the distribution on $x\in E^{N}$ with coordinates
i.i.d.\ (independent identically distributed) $\sim{\mathcal{N}}={\mathcal{N}%
}(0,1)$, i.e.,\ distributed as standard normal -- with mean $0$ and standard
deviation $1$ (that is, according to $(1/\sqrt{2\pi}) \operatorname{exp}%
(-\frac{1}{2}x^{2})\,dx$.)

As is well-known, this distribution in $E^{N}$ is invariant under any
orthogonal self-map of\emph{ }$E^{N}$. This means that $u=x/\Vert x\Vert$ will
be distributed uniformly on $S^{N-1}$. So, we have here a vehicle to get this
uniform distribution. This also implies that $\langle x,a\rangle
\sim{\mathcal{N}}$, \ for any fixed unit vector $a$.

If one considers $\sum_{i=1}^{N}\eta(x_{i})$, with any function $\eta$, then
the distribution of that sum will lose the orthogonal symmetry, but since the
$\eta(x_{i}),\,\,i=1,2,\ldots,N,$ are still independent, the Law of Large
Numbers and the Central Limit Theorem still apply. Thus, the distribution of
the sum is concentrated near its expectation.

This applies, in particular, to $\Vert x\Vert_{2}=\left(  \sum_{i=1}^{N}%
x_{i}^{2}\right)  ^{1/2}$. Its expectation is $N$, since the expectation of a
single $x_{i}^{2}$ is the variance which is equal to the square of the
standard deviation, thus, equal to $1$. And we recapture the main assertion
above: As $\Vert x\Vert/\sqrt{N}$ is near $1$ with almost full probability,
the distribution of $\langle u,a\rangle,$ for $u$ uniform on $S^{N-1},$ (e.g.,
our $u=x/\Vert x\Vert$), is very near $(1/\sqrt{N})\cdot\langle x,a\rangle$ --
the latter standard normal. In particular, $\langle u,a\rangle$ is very
unlikely to be different from zero more than in an order of magnitude of
$1/\sqrt{N}$.

We shall make use of some facts, in spirit of concentration of measure, which
arise in high dimensional Euclidean (i.e.,\ real Hilbert) $E^{N}$, which are
derived in Appendix \ref{app:ConcMeas} at the end of this paper.

\section{The case of linear superiorization (LinSup)\label{sect:linsup}}

Linear superiorization (LinSup) was investigated in \cite{linsup-ip17,
vladivostok} where a linear setting is considered. The operators of the basic
algorithm are projections on half-spaces, thus, involve linear projections on
hyperplanes plus constants, and the target function $\phi$ is linear, i.e.,
$\phi(x):=\left\langle c,x\right\rangle +a$ where $c$ is a given vector and
$a$ is a given real constant.

In the superiorization matrix ${\mathcal{M}}$ (Definition \ref{def:sm-matrix}%
{) setting, these operators act on the pairs along the neighboring $i$-th and
$(i+1)$-th columns, in particular, these operators are rotating and
stretching/shrinking the \textquotedblleft increments\textquotedblright%
\ }$\Delta_{k,i}$ defined by
\begin{equation}
\Delta_{k,i}:={\mathcal{M}}{(k,i+1)-{\mathcal{M}}(k,i).}%
\end{equation}
{\ }

To handle this, the idea is to treat the operators as a random sample. Since
what the operators do to increments does not depend on the constant part, we
characterize the operators by the unit vector $u$ orthogonal to the bounding
hyperplane of each half-space, and assume that these vectors are a sample from
a uniform distribution on $S^{N-1}$.

Then, by the principle of concentration of measure, with almost full
probability, $u$ will be almost orthogonal to the increment in question,
indeed making angle $\frac{1}{2}\pi+\alpha$ where $\alpha$ is distributed in
almost a normal distribution with standard deviation $1/\sqrt{N}$. The
hyperplane orthogonal to $u$, onto which $A_{n}$ projects, will make that
small angle $\alpha$ with the increment, thus, the projection of that
increment -- the increment in the next row -- is rotated in that small angle
$\alpha$ (and has almost the same length.)

In other words, by the principle of concentration of measure in
high-dimensional spaces that we speak of, if one has an instance of our
operator acting on a vector (in our case - an increment) $y$, it would be a
very unexpected \textquotedblleft anomaly\textquotedblright\ not to find $u$
and $y$ to be almost orthogonal -- making an angle $\frac{1}{2}\pi+\alpha$
with $\alpha$ small as above, thus, to have the hyperplane orthogonal to $u$
making that small angle $\alpha$ with $y$. All these arguments are true
provided that we are justified to use our probabilistic model (i.e., with\ $u$
distributed uniformly).

Thus, the application of the linear operator $A_{k}$ in the passage from the
$(k-1)$-th to the $k$-th row downward along the neighboring $i$-th and
$(i+1)$-th columns, the increment $\Delta_{k-1,i}$ becomes
\begin{equation}
A_{k}\Delta_{k-1,i}=A_{k}({\mathcal{M}}(k-1,i+1)-A_{k}({\mathcal{M}%
}(k-1,i))={\mathcal{M}}(k,i+1)-{\mathcal{M}}(k,i)=\Delta_{k,i},
\end{equation}
in fact adding to it an \textquotedblleft alteration\textquotedblright\ which
is, with almost full probability, normed relatively $O(1/\sqrt{N})$ of it.

In adding these alterations when moving from the $i$-th row (where the
increment was $\beta_{i}v_{i}$) to the $n$-th row where we would use
(\ref{eq:phi}), one may, with almost full probability, use Conclusion
\ref{conc:Sum} in Appendix \ref{sbsc:Norm} below,\textbf{ }to find that the
relative accumulated alteration is $O(\sqrt{n-i}/\sqrt{N})$.

Yet, as long as that relative accumulated alteration does not approach $1$, we
can be sure that the increment at the $n$-th row has less than $90^{\circ}$
angle with the original \textquotedblleft good\textquotedblright\ direction
$v_{i}$. Thus, the pair will be \textquotedblleft good\textquotedblright%
\ (i.e., $\phi$ will decrease along it), since, $\phi$ being affine, the
direction of decrease does not depend on the point in space, and we will be done.

So, we should be safe, with almost full probability, as long as $n$ (the
number of steps the algorithm has taken before being stopped) does not
approach $N$. Then we may very well expect to find that $\phi(x_{\infty
}^{\prime})\leq\phi(x_{\infty})$.

We conjecture that such considerations should give us more than the desired
inequality $\phi(x_{\infty}^{\prime})\leq\phi(x_{\infty})$. We should be able
to estimate quantitatively how much $\phi(x_{\infty}^{\prime})$ is less than
$\phi(x_{\infty})$ (with almost full probability), but we are unable to do so
at this time.

\section{The nonlinear case: A potential pathway\label{sect:nonlinear}}

\subsection{A multi-dimensional \textquotedblleft mean-value\textquotedblright%
\ fact\label{subsect:mean-v-fact}}

Let $X$, $Y$ be real Banach spaces and let $F:U\subset X\rightarrow Y$ be a
$C^{1}$ function from an open subset $U$ in $X$ to $Y$. Let $x_{0}$ and
$x_{1}$ be two points in $U$, such that the line-segment connecting them is
contained in $U$. Write $w:=x_{1}-x_{0}$.

Then,
\begin{equation}
F(x_{1})-F(x_{0})=\int_{0}^{1}\dfrac{d}{dt}F(x_{0}+tw)\,dt=\int_{0}%
^{1}DF(x_{0}+tw)_{w}\,dt. \label{eq:int}%
\end{equation}
Thus, the vector $F(x_{1})-F(x_{0})$ belongs to the closed convex hull of the
set of values of the operator $DF$ that is the derivative operator of $F$
computed at the points $x$ on the segment connecting $x_{0}$ and $x_{1}$, and
applied to $w$ denoted by $DF(x)_{w}$.

This means that in order to bound an \textquotedblleft
increment\textquotedblright\ $F(x_{1})-F(x_{0})$, in reference to
$w=x_{1}-x_{0}$, we may as well, for $C^{1}$ functions, bound the value
$DF(x)_{w}$ that the operator $DF(x)$ takes on $w$ for $x$ along the
line-segment connecting $x_{0}$ and $x_{1}$.

\subsection{Computing the derivative of the projection on a convex
set\label{sbsc:DerProj}}

Differentiability of the metric projection operator onto a convex set has been
studied in the literature, see, e.g., \cite{silhavy-2015} and references
therein. We develop this here in a self-contained manner suitable to our
needs. Let $C$ be a closed convex subset of a Hilbert space ${\mathcal{H}}$
and let $P$ be the nearest-point (metric) projection operator onto $C$. We
wish to compute the derivative operator $DP(x)$. For that we assume that $C$
has smooth boundary $\partial C$ (in the general case $\partial C$ might be
approximated by a smooth one) and assume that $x\notin C$.

Often in the literature one investigates conditions for such a projection to
be differentiable, in one or another sense, for general convex $C$, which is
not always the case and is a subtle question, e.g., \cite{shapiro-2016}. Here
we concentrate on computing the formula for the operator derivative. We do not
detail here justifications from Differential Geometry.

Let $x$ be a point in the complement of $C,$ and let $\bar{x}:=P(x)$ be the
point on the (assumed smooth) boundary $\partial C$ of $C$, at which $C$ has a
tangent (affine) hyperplane $\bar{x}+H$, which is the translation of some
(linear) hyperplane $H,$ which, of course, depends on $x$. Since $\bar{x}$ is
the nearest point to $x$ in $C$, we have the orthogonality relation
$x-P(x)\bot H$. Also, $d(x):=\Vert x-P(x)\Vert$ is the distance from $x$ to
$C$, and along the line-segment connecting $x$ to $P(x),$\thinspace
\thinspace$P$ is constant, equal to $P(x)$. Therefore, the operator derivative
$DP(x)_{w}$ vanishes on the line through $x$ and $P(x),$ i.e., for all $w\in
R(x-P(x))$.

We still have to compute $DP(x)$ on the orthogonal complement hyperplane $H$.
Let ${\mathcal{C}}:=\{x\in{\mathcal{H}}\mid d(x)=c\}$ be the \textquotedblleft
hypersurface\textquotedblright\ of points at constant distance $c>0$ from $C$,
passing through $x$.

\begin{claim}
The tangent hyperplane to ${\mathcal{C}}$ at $x$ is $H$.
\end{claim}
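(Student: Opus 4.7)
My plan is to deduce the claim from the differentiability of the distance function $d(\cdot):=\mathrm{dist}(\cdot,C)$ at the exterior point $x$. Concretely, I will show that $d$ is Fr\'echet differentiable at $x$ with gradient
\begin{equation}
\nabla d(x)=\nu:=\frac{x-P(x)}{\Vert x-P(x)\Vert},
\end{equation}
and that $\nu\perp H$ by the very characterization of $\bar x=P(x)$ as a nearest point. Since $\mathcal{C}=\{y\in\mathcal{H}\mid d(y)=c\}$ is a level set of $d$ and $\nabla d(x)=\nu\neq 0$, its tangent hyperplane at $x$ is the orthogonal complement of $\nu$, which is precisely $H$.

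\textbf{Computing $\nabla d(x)$.} The main step is to bracket $d(x+h)^2-d(x)^2$ from above and below using the variational characterization of the metric projection. First, by minimality of $P(x+h)$,
\begin{equation}
d(x+h)^2\le\Vert x+h-P(x)\Vert^2=d(x)^2+2\langle h,x-P(x)\rangle+\Vert h\Vert^2.
\end{equation}
Second, by minimality of $P(x)$,
\begin{equation}
d(x)^2\le\Vert x-P(x+h)\Vert^2=d(x+h)^2-2\langle h,(x+h)-P(x+h)\rangle+\Vert h\Vert^2.
\end{equation}
Combining these and using that $P$ is continuous on $\mathcal{H}$ (in fact $1$-Lipschitz, since $C$ is convex), so that $(x+h)-P(x+h)\to x-P(x)$ as $h\to 0$, I obtain
\begin{equation}
d(x+h)^2-d(x)^2=2\langle h,x-P(x)\rangle+o(\Vert h\Vert).
\end{equation}
Dividing by $d(x+h)+d(x)\to 2d(x)>0$ yields $d(x+h)-d(x)=\langle h,\nu\rangle+o(\Vert h\Vert)$, which identifies $\nabla d(x)=\nu$.

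\textbf{Identification with $H$.} By the definition in the text, $H$ is the (linear) hyperplane whose translate $\bar x+H$ is tangent to $\partial C$ at $\bar x=P(x)$, and the nearest-point property gives $x-P(x)\perp H$, i.e., $\nu\perp H$. Since $H$ has codimension one, $H=\nu^{\perp}$. The implicit function theorem (applicable because $\nabla d(x)\neq 0$, so $d$ is a $C^{1}$ submersion near $x$ under the assumed smoothness of $\partial C$) then shows that $\mathcal{C}$ is locally a smooth hypersurface with tangent space at $x$ equal to $(\nabla d(x))^{\perp}=\nu^{\perp}=H$, which is the desired conclusion.

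\textbf{Main obstacle.} The analytic heart is the computation of $\nabla d(x)$, which as sketched above only needs the variational inequalities and continuity of $P$. The minor subtlety is transferring this to a statement about the tangent hyperplane of $\mathcal{C}$; this requires that $d$ be genuinely $C^{1}$ near $x$, not just differentiable at $x$. Under the standing assumption in the excerpt that $\partial C$ is smooth, $P$ is itself $C^{1}$ on the exterior of $C$, so $\nabla d=\nu\circ P$ is $C^{1}$ and the level-set interpretation goes through without further work, consistent with the paper's stipulation that detailed differential-geometric justifications are omitted.
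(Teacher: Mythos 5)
Your proof is correct, but it takes a genuinely different route from the paper's. The paper differentiates the level-set identity $d^{2}(x)=\langle x-P(x),x-P(x)\rangle=c^{2}$ along a tangent vector $w$ of ${\mathcal{C}}$, obtaining $\langle w-DP(x)_{w},x-P(x)\rangle=0$, i.e.\ $w-DP(x)_{w}\in H$, and then combines this with the geometric fact that $DP(x)_{w}\in H$ (since $P$ maps into $\partial C$, whose tangent at $\bar{x}$ is $\bar{x}+H$) to conclude $w\in H$; equality of the two hyperplanes then follows from codimension one. That argument presupposes the existence of $DP(x)$ and takes the tangent space of ${\mathcal{C}}$ as given. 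You instead prove from scratch, via the two variational inequalities and the $1$-Lipschitz continuity of $P$, that the distance function is Fr\'echet differentiable at $x$ with $\nabla d(x)=(x-P(x))/d(x)$, and then read off the tangent hyperplane of the level set $\{d=c\}$ as $(\nabla d(x))^{\perp}=H$. What your route buys is that the core analytic step is valid for an arbitrary closed convex $C$ --- no smoothness of $\partial C$ and no differentiability of $P$ are needed to identify $\nabla d$ --- with the smoothness hypothesis entering only when you package the conclusion via the implicit function theorem; one small remark is that even there you do not need $P$ to be $C^{1}$, since continuity of $P$ already makes $y\mapsto(y-P(y))/d(y)$ continuous, hence $d$ is $C^{1}$ off $C$ in full generality. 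The paper's route is shorter under its standing assumptions and dovetails with the computation of $DP(x)$ that immediately follows the claim.
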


\begin{proof}
Indeed, for every $x$ on the hypersurface ${\mathcal{C}}$\thinspace\thinspace
we have $d^{2}(x)=\langle x-P(x),x-P(x)\rangle=c^{2}$. Differentiating this,
we find for any $w$ in the tangent hyperplane to ${\mathcal{C}}$ at
$x$,\thinspace\thinspace that $\langle w-DP(x)_{w},x-P(x)\rangle=0$. This
means that $(w-DP(x)_{w})\in H$. But the image of $P$ is contained in the
boundary of $C$, hence $DP(x)_{w}\in H,$ for all $w$, and we find that if $w$
is in the tangent hyperplane to ${\mathcal{C}}$ then $w\in H$, which proves
the claim.\bigskip
\end{proof}

So, our task of finding $DP(x)_{w}$ for $w\in H$ boils down to computing the
operator derivative, from $H$ to itself, of $P|_{{\mathcal{C}}}$, the
restriction of $P$ to ${\mathcal{C}}$.

Looking at the inverse mapping $Q$ of $P|_{{\mathcal{C}}}$ we see that on
points $y$ at the boundary of $C$, $Q(y)=y+d\cdot\vec{n}(y),$ where $\vec
{n}(y)$ is the outer unit normal to the boundary of $C$ at $y$, and $d$ is the
constant value of the distance on ${\mathcal{C}}$.

But the operator derivative of $\vec{n}$ is, by definition, the curvature
operator $\kappa$ from $H$ to itself, which is a positive-definite symmetric
operator, with principal axes and eigenvalues that are the directions and
values of principal curvatures, respectively, see, e.g., \cite[Chapters 1 and
7]{Lee-book}. In extreme (limiting) cases these are $0$ for flat and $\infty$
for an angle. Thus, $DQ(y)=\boldsymbol{1}+d\cdot\kappa$, $\boldsymbol{1}$
denoting the identity operator, and for the inverse $DP(x)=(\boldsymbol{1}%
+d\cdot\kappa)^{-1}$ on $H$. All the above leads to, and proves, the following lemma.

\begin{lemma}
The operator derivative of $P(x)$ at some $x\notin C$, in the case of smooth
$C$, is a positive-definite symmetric operator, equal to $0$ on $R(x-P(x))$
and equal to $(\boldsymbol{1}+d\cdot\kappa)^{-1}$ on $H=R(x-P(x))^{\bot}$,
$\kappa$ being the curvature operator for $\partial C$ at $\bar{x}=P(x)$.
Thus, $DP(x)$ is between $0$ and $1$.
\end{lemma}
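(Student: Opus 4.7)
The plan is to package the geometric observations already laid out in Subsection \ref{sbsc:DerProj} into a clean two-step argument: handle the radial direction, where the derivative vanishes, then handle the orthogonal complement $H$ by the inverse function theorem applied to the normal-bundle parametrization $Q$.

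First I would decompose $\mathcal{H}=R(x-P(x))\oplus H$ with $H=R(x-P(x))^{\bot}$. On the one-dimensional radial part, I would observe that for any $t\geq 0$ the point $\bar x + t(x-\bar x)$ still has $\bar x$ as its unique nearest point in $C$ (because $C$ is convex and $x-\bar x$ is orthogonal to $\partial C$ at $\bar x$), so $P$ is constant along this ray and hence $DP(x)_{w}=0$ for every $w\in R(x-P(x))$.

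Next, for $w\in H$, I would invoke the preceding Claim: $H$ is the tangent hyperplane at $x$ to the equidistant hypersurface $\mathcal{C}=\{y:d(y)=d\}$, with $d=\|x-P(x)\|$. Hence $DP(x)|_{H}$ coincides with the derivative of the restricted map $P|_{\mathcal{C}}\colon\mathcal{C}\to\partial C$ at $x$. I would parametrize $\mathcal{C}$ as the image of $\partial C$ under $Q(y):=y+d\cdot\vec n(y)$, where $\vec n$ is the outer unit normal field on $\partial C$. By definition of the shape (curvature) operator of $\partial C$ at $\bar x$, $D\vec n(\bar x)=\kappa$ as an operator from the tangent hyperplane of $\partial C$ at $\bar x$ to itself, which is the same $H$ because $x-\bar x\bot\partial C$ at $\bar x$. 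Therefore $DQ(\bar x)=\boldsymbol{1}+d\cdot\kappa$. Convexity of $C$ makes $\kappa$ symmetric and positive semidefinite, so $\boldsymbol{1}+d\cdot\kappa$ is invertible with eigenvalues $\geq 1$. Since $P|_{\mathcal{C}}$ is the inverse of $Q$, the inverse function theorem yields $DP(x)|_{H}=(\boldsymbol{1}+d\cdot\kappa)^{-1}$, which is symmetric and has eigenvalues in $(0,1]$.

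Combining the two pieces on the orthogonal decomposition, $DP(x)$ is the direct sum of the zero operator on $R(x-P(x))$ and $(\boldsymbol{1}+d\cdot\kappa)^{-1}$ on $H$, hence symmetric positive semidefinite with eigenvalues in $[0,1]$, which is exactly the claim. The main obstacle is the differential-geometric bookkeeping: identifying $D\vec n(\bar x)$ with the shape operator acting on the common tangent hyperplane $H$, and justifying symmetry and positivity of $\kappa$ from convexity and smoothness of $\partial C$. These are standard facts from submanifold geometry (see, e.g., \cite{Lee-book}), but were left implicit in the narrative and would need to be cited explicitly in a complete write-up; everything else is a direct chain-rule/inverse-function-theorem computation.
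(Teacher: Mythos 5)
Your proposal is correct and follows essentially the same route as the paper: constancy of $P$ along the ray through $x$ and $\bar{x}=P(x)$ kills the radial direction, the Claim identifies $H$ with the tangent space to the equidistant hypersurface $\mathcal{C}$, and inverting the normal-bundle map $Q(y)=y+d\cdot\vec{n}(y)$ with $DQ=\boldsymbol{1}+d\cdot\kappa$ gives $DP(x)|_{H}=(\boldsymbol{1}+d\cdot\kappa)^{-1}$. Your write-up is in fact slightly more careful than the paper's, in explicitly invoking the inverse function theorem and noting that convexity is what makes $\kappa$ symmetric positive semidefinite.
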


By the way, this immediately implies, by our \textquotedblleft
Mean-Value\textquotedblright\ Fact in Subsection \ref{subsect:mean-v-fact}
that, in the smooth $\partial C$ case, (otherwise one may approximate $C$ by a
smooth) $P$ is nonexpansive and monotone. This is a well-known fact, that is
usually proved in the literature in other ways. See, e.g., \cite[Fact
1.5]{bb96} for nonexpansivness and \cite[Example 20.12]{BC-book} for
monotonicity of $P,$ respectively.

\subsection{Toward the nonlinear case}

For the nonlinear case the situation is more complicated. Here the operators
$A_{n}$ are projections onto convex sets. Recall the superiorization
matrix\ (Subsection \ref{sbsc:Matrix}). To compare $\phi(x_{n})=\phi
({\mathcal{M}}(n,0))$ with $\phi(x_{n}^{\prime})=\phi({\mathcal{M}}(n,n))$, we
add, as in (\ref{eq:phi}), the \textquotedblleft increments\textquotedblright%
\ at the $(n,i)$-th and $(n,i+1)$-th entry, these coming from moving along the
columns by applying the $A_{n}$ operators and then applying a $\phi$ value
reduction step.

Our task is basically to assess increments. By the \textquotedblleft
Mean-Value\textquotedblright\ Fact in Subsection \ref{subsect:mean-v-fact}, we
may instead assess the result of operator derivatives $DP(x)$ acting
successively, and then $\nabla\phi$, on the original difference $\beta
_{i}v_{i}$ that we had between the $(i,i+1)$-th and the $(i,i)$-th entries.
Indeed, the summands in (\ref{eq:phi}) are their integrals as in (\ref{eq:int}).

By Subsection \ref{sbsc:DerProj}, this cascade of $DP(x)$'s operates as
follows: each of them first projects its argument $w$ onto $H$
($H=(x-P(x))^{\bot}$, is, of course, a function of $x$). By our above
principle of concentration of measure, $w$ is very unlikely not to be almost
orthogonal to the normal of $H$, i.e.,\ to form a small angle $\alpha$ with
$H$, where $\alpha$ is distributed almost $\sim{\mathcal{N}}(0,1/\sqrt{N})$
($N$ is the dimension of the Euclidean space $E^{N}$). But, contrary to the
linear case, the projected part is then subjected to the action of
$(\boldsymbol{1}+d\cdot\kappa)^{-1}$. Indeed, in the linear case the curvature
operator $\kappa$ is always equal $0$ and $(\boldsymbol{1}+d\cdot\kappa
)^{-1}=\boldsymbol{1}$.

In order to achieve our goal to have \textquotedblleft good\textquotedblright%
\ increments along the $n$-th row, it would be good if the result of applying
successively the cascade of operators on $\beta_{i}v_{i}$ makes an angle
smaller or equal $90^{\circ}$ with $-\nabla\phi(x)$ \emph{computed at the
final point $x$} (while we chose the $v_{i}$ in some way to be OK at the
initial point). That might be hampered both by the deviations caused by the
operator derivatives -- the $\alpha$ and the effect of $\kappa$, and by the
change in $\nabla\phi$ between the initial and final points. We address these
issues, in the light of \textquotedblleft concentration of
measure\textquotedblright\ conclusions of Appendices \ref{sbsc:Sum},
\ref{sbsc:LinOp} and \ref{sbsc:RotMat} that are at the end of the paper.
Specifically, we try to bound, for our path down the column of the
superiorization matrix (\ref{eq:sup-matrix}),

(1) How much the vector is rotated by the $DP$'s -- the effects both of
$\alpha$ and $\kappa$,

(2) By how much its norm has decreased, and

(3) How much the place to compute $\nabla\phi$ \textquotedblleft
moved\textquotedblright\ from the initial to the final point in ${\mathcal{H}%
}$.

First, by Lemma \ref{conc:Sum} the distance between $x$ (where we chose
$v_{i}$) and the result of applying the cascade of $P$'s to it (and where we
should compute $\nabla\phi$) is supposed to be near the square root of the sum
of the distances $d=\Vert x-P(x)\Vert$ along the way from the $i$-th to the
$n$-th stage. Thus, it is small in the final stages when the $d$'s are small
(indeed, they are converging to $0$).

As for the effect of the $\alpha$ alteration by the projections, the situation
is as in the linear case -- we should be safe as long as $n$ does not approach
the dimension $N$ of the Euclidean space $E^{N}$.

For the accumulated terms $({\mathbf{1}}+d\cdot{\mathbf{\kappa}})^{-1}$ along
the path (in what follows we denote by $k$ indices along the path, i.e.,
$k\in\operatorname*{path})$ denote the eigenvalues (here, also the singular
values) of the encountered ${\mathbf{\kappa}}_{k}$ (the curvature operator in
the hyperplane $H$), i.e.,\ the relevant principal curvatures, by
$(\kappa_{\ell}^{(k)})_{\ell},$ for $\ell=1,2,\ldots,N-1$. Then those of
$({\mathbf{1}}+d\cdot{\mathbf{\kappa}})^{-1}$ are $\left(  (1+d_{k}\cdot
\kappa_{\ell}^{(k)})^{-1}\right)  _{\ell}$, so that, by Conclusion
\ref{conc:ActMat}, and using the $\Vert\cdot\,\Vert_{p}^{(\pi)}$ norm of
Appendix \ref{sbsc:Norm} below, for $(N-1)$-dimensional vectors, their product
is expected to multiply the norm of the vector they act upon by
\begin{equation}
\prod_{k\in\operatorname*{path}}\left\Vert \left(  (1+d_{k}\cdot\kappa_{\ell
}^{(k)})^{-1}\right)  _{\ell}\right\Vert _{2}^{(\pi)},\label{eq:Norm}%
\end{equation}
still with relative deviation of the order of at most $O(\sqrt{n-i}/\sqrt{N})$.

By Conclusion \ref{conc:ProdMat} in Appendix \ref{sbsc:RotMat}, they are
expected to rotate the direction of the vector, i.e.,\ shift the normalized
vector, by
\begin{equation}
\sqrt{2\left(  1-\prod_{k\in\operatorname*{path}}\dfrac{\left\Vert \left(
(1+d_{k}\cdot\kappa_{\ell}^{(k)})^{-1}\right)  _{\ell}\right\Vert _{1}^{(\pi
)}}{\left\Vert \left(  (1+d_{k}\cdot\kappa_{\ell}^{(k)})^{-1}\right)  _{\ell
}\right\Vert _{2}^{(\pi)}}\right)  }. \label{eq:Devi}%
\end{equation}
with relative deviation of the order of at most $O(\sqrt{n-i}/\sqrt{N})$.

Observe that $\Vert\,\cdot\Vert_{1}^{(\pi)}$ $\leq$ $\Vert\,\cdot\Vert
_{2}^{(\pi)}$ (cf.\ Appendix \ref{sbsc:Norm}) and, by\ Remark
\ref{remark:ProdMat}, the value of (\ref{eq:Devi}) is always $\leq\sqrt{2}$,
meaning angle of rotation $\leq90^{\circ}$. Indeed, in many cases it will be
much less than $90^{\circ}$. For example, for vectors $\left(  (1+d_{k}%
\cdot\kappa_{\ell}^{(k)})^{-1}\right)  _{\ell}$ with equal (resp.\ almost
equal) entries (in our case -- either \textquotedblleft
spherical\textquotedblright\ curvature or when the $d_{k}\cdot\kappa$ are
small), the $\Vert\cdot\,\Vert_{2}^{(\pi)}$ norm will be equal (resp.\ almost
equal) to the $\Vert\cdot\,\Vert_{1}^{(\pi)}$ norm, hence the terms in the
product in (\ref{eq:Devi}) will be near $1$.

Both (\ref{eq:Norm}) and (\ref{eq:Devi}) refer to the $(N-1)$-dimensional
vectors $v=((1+d_{k}\cdot\kappa_{\ell}^{(k)})^{-1})_{\ell}$, having entries in
$(0,1]$. In (\ref{eq:Norm}), which controls how much the norm was reduced, we
have the product of $\Vert v\Vert_{2}^{(\pi)}$. In (\ref{eq:Devi}), which
controls how much the direction was rotated, we have the square root of twice
$1$ minus the product of $\Vert v\Vert_{1}^{(\pi)}/\Vert v\Vert_{2}^{(\pi)}$.

\begin{proposition}
For an $(N-1)$-dimensional vector $v=(v_{\ell})_{\ell}$ with components
$v_{\ell}\in(0,1]$, we have
\begin{equation}
\left(  \Vert v\Vert_{2}^{(\pi)}\right)  ^{2}\leq\Vert v\Vert_{1}^{(\pi)}%
\leq\frac{1}{2}\left(  \left(  \Vert v\Vert_{2}^{(\pi)}\right)  ^{2}+1\right)
.
\end{equation}

\end{proposition}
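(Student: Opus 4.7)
The plan is to observe that each inequality reduces to a scalar pointwise inequality on $(0,1]$, which then extends to the averaged $(\pi)$-norms by taking a uniform mean over indices. Recall from Appendix \ref{sbsc:Norm} that for an $(N-1)$-dimensional vector $v=(v_\ell)_\ell$ one has $\|v\|_1^{(\pi)} = \frac{1}{N-1}\sum_\ell |v_\ell|$ and $(\|v\|_2^{(\pi)})^2 = \frac{1}{N-1}\sum_\ell v_\ell^2$, i.e.,\ these are the $L^p$-norms with respect to the uniform probability measure on $\{1,\ldots,N-1\}$. Note that this normalization is what makes $\|v\|_1^{(\pi)} \leq \|v\|_2^{(\pi)}$ (Jensen) rather than the reverse, as was already asserted in the discussion preceding the proposition.

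First I would establish the lower bound. The key pointwise fact is that $v_\ell^2 \leq v_\ell$ whenever $v_\ell \in (0,1]$, since then $v_\ell(1-v_\ell) \geq 0$. Summing over $\ell$ and dividing by $N-1$ gives $(\|v\|_2^{(\pi)})^2 \leq \|v\|_1^{(\pi)}$. This is the only place where the hypothesis $v_\ell \in (0,1]$ is actually needed.

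For the upper bound I would appeal to the elementary identity $\tfrac{1}{2}(v_\ell^2 + 1) - v_\ell = \tfrac{1}{2}(v_\ell - 1)^2 \geq 0$, valid for every real $v_\ell$. Rearranging as $v_\ell \leq \tfrac{1}{2}(v_\ell^2 + 1)$, summing over $\ell$, and dividing by $N-1$ yields $\|v\|_1^{(\pi)} \leq \tfrac{1}{2}\bigl((\|v\|_2^{(\pi)})^2 + 1\bigr)$.

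There is no real obstacle; both assertions are one-line consequences of pointwise scalar inequalities together with the linearity of averaging. The conceptual content of the proposition is that the two bounds together bracket $\|v\|_1^{(\pi)}$ between $(\|v\|_2^{(\pi)})^2$ and $\tfrac{1}{2}((\|v\|_2^{(\pi)})^2 + 1)$, so that the ratio $\|v\|_1^{(\pi)}/\|v\|_2^{(\pi)}$ entering the rotation estimate (\ref{eq:Devi}) is controlled entirely by how close the entries $v_\ell = (1+d_k\cdot\kappa_\ell^{(k)})^{-1}$ are to $1$, i.e.,\ by how small the products $d_k\cdot\kappa_\ell^{(k)}$ become along the path --- which is exactly the regime one expects in the tail of the superiorized iteration, where the distances $d_k$ go to zero.
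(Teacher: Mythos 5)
Your proof is correct and follows essentially the same route as the paper's: the lower bound is obtained identically from the pointwise inequality $v_{\ell}^{2}\leq v_{\ell}$ on $(0,1]$ followed by averaging, and your upper bound via $v_{\ell}\leq\frac{1}{2}(v_{\ell}^{2}+1)$, i.e.\ $(v_{\ell}-1)^{2}\geq0$, is algebraically the same inequality the paper reaches by factoring $1-v_{\ell}^{2}=(1-v_{\ell})(1+v_{\ell})$ and bounding $1+v_{\ell}\leq2$. Your presentation of the upper bound is marginally cleaner in that it makes explicit that this half of the statement needs no hypothesis on $v_{\ell}$ at all.
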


\begin{proof}
Since $v_{\ell}\in(0,1]$, one has $v_{\ell}^{2}\leq v_{\ell}$. Averaging, we
get $\left(  \Vert v\Vert_{2}^{(\pi)}\right)  ^{2}\leq\Vert v\Vert_{1}^{(\pi
)}$. Also, by definition of $\Vert v\Vert_{2}^{(\pi)}$ for $(N-1)$-dimensional
vectors, see Appendix \ref{sbsc:Norm},
\begin{align}
&  \left(  \Vert v\Vert_{2}^{(\pi)}\right)  ^{2}=\dfrac{1}{N-1}\sum_{\ell
=1}^{N-1}v_{\ell}^{2}=1-\dfrac{1}{N-1}\sum_{\ell=1}^{N-1}(1-v_{\ell}^{2})\\
&  =1-\dfrac{1}{N-1}\sum_{\ell=1}^{N-1}(1-v_{\ell})(1+v_{\ell})\geq
1-2\dfrac{1}{N-1}\sum_{\ell=1}^{N-1}(1-v_{\ell})\nonumber\\
&  =2\dfrac{1}{N-1}\sum_{\ell=1}^{N-1}v_{\ell}-1=2\Vert v\Vert_{1}^{(\pi)}-1.
\end{align}
Hence, $\Vert v\Vert_{1}^{(\pi)}\leq\frac{1}{2}(\Vert v\Vert_{2}^{(\pi)}%
)^{2}+1)$, which completes the proof.
\end{proof}

As a consequence of this proposition we have,
\begin{align}
&  \dfrac{\Vert v\Vert_{1}^{(\pi)}}{\Vert v\Vert_{2}^{(\pi)}}\geq\dfrac{(\Vert
v\Vert_{2}^{(\pi)})^{2}}{\Vert v\Vert_{2}^{(\pi)}}=\Vert v\Vert_{2}^{(\pi
)},\nonumber\\
&  \dfrac{\Vert v\Vert_{1}^{(\pi)}}{\Vert v\Vert_{2}^{(\pi)}}\leq\frac{1}%
{2}\dfrac{(\Vert v\Vert_{2}^{(\pi)})^{2}+1}{\Vert v\Vert_{2}^{(\pi)}}=\frac
{1}{2}\left(  \Vert v\Vert_{2}^{(\pi)}+\dfrac{1}{\Vert v\Vert_{2}^{(\pi)}%
}\right)  ,
\end{align}

So, there is here a \emph{\textquotedblleft balancing effect\textquotedblright%
} -- if the angle of rotation becomes close to $90^{\circ}$ in (\ref{eq:Devi}%
), then the norm will be reduced considerably in (\ref{eq:Norm}). Thus, when
$i$ is such that $d_{i}$ times a \textquotedblleft typical\textquotedblright%
\ curvature ${\mathbf{\kappa}}$ (loosely, the ratio between $d$ and a
\textquotedblleft typical\textquotedblright\ radius of the $C_{i}$) is still
considerably larger than $1$ (maybe while in the early columns of the
superiorization matrix with $i$ small), then, by (\ref{eq:Norm}), the cascade
of $DP$ will reduce the norm hugely, hence, anyway applying $\nabla\phi$ then
will give a negligible result.

On the other hand, when we reach a stage where $d_{i},d_{i+1},\ldots,d_{n}$
are small, both the possible rotation and the distance traveled are
controlled. But of course, then the decrease of the $\beta_{k}$ should also be
taken into account. For big $i$, thus small $\beta_{i}$, the contribution
might again be negligible. This shows that the main contribution in
(\ref{eq:phi}) seems to come from intermediate terms.

As said above, the angle of rotation, both by the $\alpha$ and by the $\kappa$
seems to be controlled, as long as the number of steps $n$ does not approach
the vector space dimension $N$. If conditions are imposed on the target
function $\phi$ then point (3) above could also be tackled, in view of the
preceeding paragraph, bringing our analysis closer to conclusion.

\section{Concluding comments\label{sect:concluding}}

We explored here the fundamental open problem of the superiorization method
which is the question under what conditions one can guarantee that a
superiorized version of a bounded perturbation resilient feasibility-seeking
algorithm converges to a feasible point that has target function value smaller
or equal to that of a point to which this algorithm would have converged if no
perturbations were applied -- everything else being equal.

The success of the superiorization method in many real-world applications, as
witnessed in \cite{sup-bib-online}, made this an important question. However,
in the absence of a conclusive deterministic argument, we applied here the
probabilistic principle of concentration of measure. For linear
superiorization (LinSup) this approach works quite well whereas our analysis
aimed at using it for a general nonlinear situation is still less
conclusive.\bigskip\appendix

\section{Some concentration of measure facts in a high-dimensional $E^{N}%
$\label{app:ConcMeas}}

\subsection{The probability $L^{p}$ norms of vectors\label{sbsc:Norm}}

For a vector $x\in E^{N}$, and $1\leq p<\infty$, denote by $\Vert\,\cdot
\Vert_{p}^{(\pi)}$ ($\pi$ stands for \textquotedblleft probability
space\textquotedblright) its $L^{p}$ norm when the set of indices
$\{1,2,\ldots,N\}$ is made into a uniform probability space, giving each index
a weight $1/N$, namely
\begin{equation}
\Vert x\Vert_{p}^{(\pi)}:=\left(  \dfrac{1}{N}\sum_{j=1}^{N}|x_{j}%
|^{p}\right)  ^{1/p}, \label{eq:Lp}%
\end{equation}
see, e.g., \cite{song97}. As with any probability measure, always
$\Vert\,\cdot\Vert_{p}^{(\pi)}$ increases with $p$.

For $x_{1},x_{2},\ldots,x_{N}$ i.i.d.\ $\sim{\mathcal{N}}$, $\left(  \Vert
x\Vert_{p}^{(\pi)}\right)  ^{p}$ is an average: its expectation ${\mathbb{E}}$
will be the same as the expectation of $|x|^{p}$ for $x$ a scalar distributed
$\sim{\mathcal{N}}$:
\begin{equation}
{\mathbb{E}}\left[  |x|^{p}\right]  =\dfrac{1}{\sqrt{2\pi}}\int|x|^{p}%
\operatorname{exp}(-\textstyle{\frac{1}{2}}x^{2})\,dx,
\end{equation}
but its standard deviation will be $1/\sqrt{N}$ that of $|x|^{p}$ for a scalar
$\sim{\mathcal{N}}$:
\begin{equation}
\dfrac{1}{\sqrt{N}}\dfrac{1}{\sqrt{2\pi}}\int\left(  |x|^{p}-{\mathbb{E}%
}\left[  |y|^{p}\right]  \right)  ^{2}\operatorname{exp}(-\textstyle{\frac
{1}{2}}x^{2})\,dx.
\end{equation}
Thus, $\Vert x\Vert_{p}^{(\pi)}$ is highly concentrated around the, not
depending on $N$, $\left(  {\mathbb{E}}\left[  |x|^{p}\right]  \right)
^{1/p}$ with degree of concentration $O(1/\sqrt{N})$.

One may conclude, loosely speaking, that in any case, these $\Vert\,\cdot
\Vert_{p}^{(\pi)}$ norms, having not depending on $N$ means, are expected to
be $O(1)$, for all $N$.

\subsection{The norm of the sum of vectors with given norms\label{sbsc:Sum}}

Suppose we are given $M$ vectors $y_{1},y_{2},\ldots,y_{M}$ of known norms
$d_{1},d_{2},\ldots.d_{M}$ in $E^{N}$. What should we expect the norm of their
sum to be?

This can be answered: take the direction of each of them distributed uniformly
on $S^{N-1}$, \emph{even conditioned on fixed valued for the others}. In other
words, take them independent, each with direction distributed uniformly. This
can be constructed by taking random $M$ vectors in $E^{N}$ (that is, a random
$M\times N$ matrix), with entries i.i.d.\ $\sim{\mathcal{N}}$, dividing them
by $\sqrt{N}$, then by their norm (now highly concentrated near $1$) and
multiplying them by $d_{1},d_{2},\ldots,d_{M},$ respectively.

The sum $\sum_{i=1}^{M}y_{i}$, if we ignore the division by the norm, is
$1/\sqrt{N}$ times the random matrix applied to the vector $(d_{1}%
,d_{2},\ldots,d_{M})$. But the distribution of the random matrix is invariant
with respect to any transformation which is orthogonal with respect to the
Hilbert-Schmidt norm -- the square root of the sum of squares of the entries
(i.e.,\ $\Vert T\Vert_{HS}:=\sqrt{\operatorname*{tr}(T^{\prime}\cdot T)}%
=\sqrt{\operatorname*{tr}(T\cdot T^{\prime})}$,\thinspace\thinspace$T^{\prime
}$ denoting the transpose and $\operatorname*{tr}$ standing for the trace,
see, e.g., \cite{bell16}). In particular, the distribution of the sum is the
same as that of $1/\sqrt{N}$ times $\sqrt{d_{1}^{2}+d_{2}^{2}+\cdots+d_{M}%
^{2}}$ times the random matrix applied to $(1,0,\ldots,0)$, which is, of
course, distributed with independent $\sim{\mathcal{N}}$ entries, thus, with
norm concentrated near $\sqrt{N}$. (With relative deviation $O(1/\sqrt{N})$.)
This leads to the following conclusion.

\begin{conclusion}
\label{conc:Sum} For $M$ vectors $y_{1},y_{2},\ldots,y_{M}$ of known norms
$d_{1},d_{2},\ldots.d_{M}$, in $E^{N}$ we have that $\Vert\sum_{i=1}^{M}%
y_{i}\Vert$ is near $\sqrt{d_{1}^{2}+d_{2}^{2}+\cdots+d_{M}^{2}}$ with almost
full probability (With relative deviation $O(1/\sqrt{N})$.)
\end{conclusion}

\subsection{The accumulation of given distances on the unit
sphere\label{sbsc:Sphere}}

As in the previous Appendix \ref{sbsc:Sum}, we seek to find what should we
expect the norm of a sum of $M$ vectors of given norms $d_{1},d_{2}%
,\ldots.d_{M}$ to be. But here the vectors are \emph{the differences between
consecutive elements in a sequence of points on the unit sphere}
$S^{N-1}\subset E^{N}$. Denote by $\omega_{N-1}$ the normalized to be
probability (i.e.,\ of total mass $1$) uniform measure on $S^{N-1}$.

\begin{remark}
By symmetry, for $x=(x_{1},x_{2},\ldots,x_{N})\in S^{N-1}$, $\int x_{k}%
^{2}\,d\omega_{N-1}$ is the same for all $k$. Of course, their sum is
$\int1\,d\omega_{N-1}=1$. Therefore,
\begin{equation}
\int x_{k}^{2}\,d\omega_{N-1}=\dfrac{1}{N},\qquad k=1,2,\ldots,N.
\label{eq:integral}%
\end{equation}
Hence, for a polynomial of degree $\leq2$ on $E^{n}$:
\begin{equation}
p(x)=\langle Qx,x\rangle+2\langle a,x\rangle+\gamma, \label{eq:poly}%
\end{equation}
where $Q$ is a symmetric $N\times N$ matrix, $a\in E^{N}$ and $\gamma\in E$,
we will have
\begin{equation}
\int p(x)\,d\omega_{N-1}=\dfrac{1}{N}\operatorname*{tr}\,Q+\gamma.
\end{equation}

\end{remark}

Note that, for some fixed $0\leq d\leq2$, the set of points in $S^{N-1}$ of
distance $d$ from some fixed vector $u\in S^{N-1}$ is the $(N-2)$-sphere
$\subset S^{N-1},$ $\Sigma(u,d)$ given by
\begin{equation}
\Sigma(u,d):=(1-d^{2}/2)u+d\sqrt{1-d^{2}/4}\cdot{S^{N-2}}_{u^{\bot}%
},\label{eq:sphere}%
\end{equation}
where ${S^{N-2}}_{u^{\bot}}$ stands for the unit sphere in the hyperplane
prependicular to $u.$ In our scenario, one performs a \emph{Markov chain}$,$
see, e.g., \cite{markov2000}\emph{.} Starting from a point $u_{0}$ on
$S^{N-1}$, and moving to a point $u_{1}\in\Sigma(u_{0},d_{1})$ uniformly
distributed there. Then, from that $u_{1}$, to a point $u_{2}\in\Sigma
(u_{1},d_{2})$ uniformly distributed there, and so on, until one ends with
$u_{M}$. We would like to find ${\mathbb{E}}\left[  \Vert u_{M}-u_{0}%
\Vert\right]  $.

If we denote by ${\mathcal{L}}_{d}$ the operator mapping a function $p$ on
$S^{N-1}$ to the function whose value at a vector $u\in S^{N-1}$ is the
average of $p$ on $\Sigma(u,d)$, then ${\mathcal{L}}_{d_{k}}(p)$ evaluated at
$u$ is the expectation of $p$ at the point to which $u$ moved in the $k$-th
step above. Hence, in the above Markov chain, the expectation of $p(u_{M})$
is
\begin{equation}
{\mathcal{L}}_{d_{M}}{\mathcal{L}}_{d_{M-1}}\cdots{\mathcal{L}}_{d_{1}}%
p(u_{0}).
\end{equation}
Thus, what we are interested in is
\begin{equation}
{\mathbb{E}}\left[  \Vert u_{M}-u_{0}\Vert\right]  ={\mathcal{L}}_{d_{M}%
}{\mathcal{L}}_{d_{M-1}}\cdots{\mathcal{L}}_{d_{1}}(\Vert x-u_{0}\Vert
^{2})(u_{0}).
\end{equation}

So, let us calculate ${\mathcal{L}}_{d}(p)$ for polynomials of degree $\leq2$
as in (\ref{eq:poly}). In performing the calculation, assume $u=(1,0,\ldots
,0)$. For $x=(x_{1},x_{2},\ldots,x_{N})\in E^{N}$ write $y=(x_{2},x_{3}%
,\ldots,x_{N})\in E^{N-1}$. In (\ref{eq:poly}) write $a=(a_{1},b)$ where
$b=(a_{2},a_{3},\ldots a_{N})\in E^{N-1}$ and
\begin{equation}
Q=\left(
\begin{array}
[c]{cc}%
\eta & c^{\prime}\\
c & Q^{\prime}%
\end{array}
\right)  ,
\end{equation}
where $Q^{\prime}$ is a symmetric $(N-1)\times(N-1)$ matrix, $c\in E^{N-1}$
and $\eta\in E$. Note that for our $u=(1,0,\ldots,0)$,\thinspace
\thinspace\thinspace$a_{1}=\langle a,u\rangle$,\thinspace\thinspace
\thinspace\ $\eta=\langle Qu,u\rangle$ and $\operatorname*{tr}\,Q^{\prime
}=\operatorname*{tr}\,Q-\eta=\operatorname*{tr}\,Q-\langle Qu,u\rangle$.

Then, for $p$ as in in (\ref{eq:poly}),
\begin{equation}
p(x)=\eta x_{1}^{2}+2x_{1}\langle c,y\rangle+\langle Q^{\prime}y,y\rangle
+2a_{1}x_{1}+2\langle b,y\rangle+\gamma.
\end{equation}
Hence, taking account of (\ref{eq:sphere}) for $u=(1,0,\ldots,0)$, and using
(\ref{eq:integral}),
\begin{align}
&  ({\mathcal{L}}_{d}p)(u)=({\mathcal{L}}p)(1,0,\ldots,0)\nonumber\\
&  =\left(  1-\dfrac{d^{2}}{2}\right)  ^{2}\eta+\dfrac{1}{N-1}d^{2}\left(
1-\dfrac{d^{2}}{4}\right)  \operatorname*{tr}\,Q^{\prime}+2\left(
1-\dfrac{d^{2}}{2}\right)  a_{1}+\gamma\nonumber\\
&  =\left(  1-\dfrac{d^{2}}{2}\right)  ^{2}\langle Qu,u\rangle+\dfrac{1}%
{N-1}d^{2}\left(  1-\dfrac{d^{2}}{4}\right)  \left(  \operatorname*{tr}%
Q-\langle Qu,u\rangle\right)  \nonumber\\
&  +2\left(  1-\dfrac{d^{2}}{2}\right)  \langle a,u\rangle+\gamma,
\end{align}
which, by symmetry, will hold for any $u\in S^{N-1}$. In particular, we find,
as should be expected, that
\begin{align}
&  \int({\mathcal{L}}_{d}(p))(x)\,d\omega_{N-1}\nonumber\\
&  =\dfrac{1}{N}\left(  1-\dfrac{d^{2}}{2}\right)  ^{2}\operatorname*{tr}%
\,Q+\dfrac{1}{N-1}d^{2}\left(  1-\dfrac{d^{2}}{4}\right)  \left(  1-\dfrac
{1}{N}\right)  \operatorname*{tr}\,Q+\gamma\nonumber\\
&  =\dfrac{1}{N}\operatorname*{tr}\,Q+\gamma=\int p(x)\,d\omega_{N-1}.
\end{align}
We are interested, for some fixed $u\in S^{N-1}$, in
\begin{equation}
p(x)=\Vert x-u\Vert^{2}=2(1-\langle u,x\rangle).
\end{equation}
Then there is no $Q$ term, so one has
\begin{equation}
\Big ({\mathcal{L}}_{d}\big (2(1-\langle u,x\rangle)\big )\Big )(u)=2\left(
1-\left(  1-\dfrac{d^{2}}{2}\right)  \langle u,x\rangle\right)  .
\end{equation}
Consequently,
\begin{align}
&  {\mathbb{E}}\left[  \Vert u_{M}-u_{0}\Vert^{2}\right]  =\left(
{\mathcal{L}}_{d_{M}}{\mathcal{L}}_{d_{M-1}}\cdots{\mathcal{L}}_{d_{1}}(\Vert
x-u_{0}\Vert^{2})\right)  (u_{0})\nonumber\\
&  =2\left(  1-\left.  \left(  {\mathcal{L}}_{d_{M}}{\mathcal{L}}_{d_{M-1}%
}\cdots{\mathcal{L}}_{d_{1}}(\langle u_{0},x\rangle)\right)  \right\vert
_{x=u_{0}}\right)  \nonumber\\
&  2\left.  \left(  1-\prod_{i=1}^{M}\left(  1-\dfrac{d_{i}^{2}}{2}\right)
\langle u_{0},x\rangle\right)  \right\vert _{x=u_{0}}=2\left(  1-\prod
_{i=1}^{M}\left(  1-\dfrac{d_{i}^{2}}{2}\right)  \right)  .
\end{align}
This is $O\left(  M\cdot\left(  \Vert(d_{1},d_{2},\ldots,d_{M})\Vert_{2}%
^{(\pi)}\right)  ^{2}\right)  $. We also assess the standard deviation, which
is%
\begin{equation}
=2\sqrt{\left.  {\mathcal{L}}_{d_{M}}{\mathcal{L}}_{d_{M-1}}\cdots
{\mathcal{L}}_{d_{1}}(\langle u_{0},x\rangle^{2})\right\vert _{x=u_{0}%
}-\left(  \prod_{i=1}^{M}\left(  1-\dfrac{d_{i}^{2}}{2}\right)  \right)  ^{2}%
}.
\end{equation}
Here $p(x)=\langle a,x\rangle^{2}$, so there is only the $Q$ term with
$Q(x):=\langle a,x\rangle^{2}$. Then $\operatorname*{tr}\,Q=\Vert a\Vert^{2}$,
and we find
\begin{align}
&  \Big ({\mathcal{L}}_{d}\big (\langle a,x\rangle^{2}%
\big )\Big )(u)\nonumber\\
&  =\left(  1-\dfrac{d^{2}}{2}\right)  ^{2}\langle a,u\rangle^{2}+\dfrac
{1}{N-1}d^{2}\left(  1-\dfrac{d^{2}}{4}\right)  \left(  \Vert a\Vert
^{2}-\langle a,u\rangle^{2}\right)  \nonumber\\
&  =\left(  1-\dfrac{N}{N-1}d^{2}\left(  1-\dfrac{d^{2}}{4}\right)  \right)
\langle a,u\rangle^{2}+\dfrac{1}{N-1}d^{2}\left(  1-\dfrac{d^{2}}{4}\right)
\Vert a\Vert^{2}.
\end{align}
Consequently, for $a=u_{0}$ (note $\Vert u_{0}\Vert^{2}=1$)),
\begin{align}
&  \left.  \left(  {\mathcal{L}}_{d_{M}}{\mathcal{L}}_{d_{M-1}}\cdots
{\mathcal{L}}_{d_{1}}(\langle u_{0},x\rangle^{2})\right)  \right\vert
_{x=u_{0}}-\left(  \prod_{i=1}^{M}\left(  1-\dfrac{d_{i}^{2}}{2}\right)
\right)  ^{2}\nonumber\\
&  =-\left(  \prod_{i=1}^{M}\left(  1-\dfrac{d_{i}^{2}}{2}\right)  \right)
^{2}+\prod_{i=1}^{M}\left(  1-\dfrac{N}{N-1}d_{i}^{2}\left(  1-\dfrac
{d_{i}^{2}}{4}\right)  \right)  \nonumber\\
&  +\dfrac{1}{N-1}\left[  d_{1}^{2}\left(  1-\dfrac{d_{1}^{2}}{4}\right)
+d_{2}^{2}\left(  1-\dfrac{d_{2}^{2}}{4}\right)  \left(  1-\dfrac{N}{N-1}%
d_{1}^{2}\left(  1-\dfrac{d_{1}^{2}}{4}\right)  \right)  \right.  \nonumber\\
&  +d_{3}^{2}\left(  1-\dfrac{d_{3}^{2}}{4}\right)  \left(  1-\dfrac{N}%
{N-1}d_{2}^{2}\left(  1-\dfrac{d_{2}^{2}}{4}\right)  \right)  \left(
1-\dfrac{N}{N-1}d_{1}^{2}\left(  1-\dfrac{d_{1}^{2}}{4}\right)  \right)
\nonumber\\
&  \left.  +\cdots+{\text{ }}d_{M}^{2}\left(  1-\dfrac{d_{M}^{2}}{4}\right)
\prod_{i=1}^{M}\left(  1-\dfrac{N}{N-1}d_{i}^{2}\left(  1-\dfrac{d_{i}^{2}}%
{4}\right)  \right)  \right]  .
\end{align}
This is $O\left(  (M/N)\cdot\left(  \Vert(d_{1},d_{2},\ldots,d_{M})\Vert
_{4}^{(\pi)}\right)  ^{4}\right)  $, since the constant terms and the terms
with $d_{k}^{2}$ cancel, and the terms which do not cancel are coefficiented
by $O(1/N)$. Therefore, twice its square root, the standard deviation, will be
$O\left(  \sqrt{M}/\sqrt{N}\left(  \Vert(d_{1},d_{2},\ldots,d_{M})\Vert
_{4}^{(\pi)}\right)  ^{2}\right)  $, making the relative deviation
$O(1/\sqrt{MN})$. This leads to the following conclusion.

\begin{conclusion}
\label{conc:SphereVectors} The square of the norm of the sum of $M$ vectors of
given norms $d_{1},d_{2},\ldots.d_{M}$, which are differences between
consecutive elements in a sequence of points on the unit sphere $S^{N-1}%
\subset E^{N}$, modeled by the above Markov chain, is with almost full
probability, near%
\begin{equation}
2\left(  1-\prod_{i=1}^{M}\left(  1-\dfrac{d_{i}^{2}}{2}\right)  \right)  .
\end{equation}
(With relative deviation $O(1/\sqrt{MN})$.)
\end{conclusion}

\subsection{A reminder: Polar decomposition and singular values of a
matrix\label{sbsc:Decom}}

As is well-known, see, e.g., \cite{lange2010}, every fixed $N\times N$ matrix
$T$ can be uniquely written as $T=UA$ with $U$ orthogonal and $A$ symmetric
positive semidefinite (take $A=\sqrt{T^{\prime}\cdot T}$, then for every
vector $x$,\thinspace\thinspace$\Vert Tx\Vert$=$\Vert Ax\Vert$, so the map
$Ax\mapsto Tx$ is norm-preserving, i.e.,\ orthogonal), and also uniquely
written as $T=A_{1}U_{1}$ with $U_{1}$ orthogonal and $A_{1}$ symmetric
positive semidefinite (take $A_{1}=\sqrt{T\cdot T^{\prime}}$).

The \emph{singular values} of $T$ are defined as the eigenvalues of its
positive semidefinite part in the above decomposition. (It does not matter
from which side: $T\cdot T^{\prime}$ and $T^{\prime}\cdot T$ have the same
eigenvalues. Note that if $T$ is invertible they are similar: $T^{\prime
-1}(T\cdot T^{\prime})T$.)

Since any positive semidefinite matrix with eigenvalues $s_{1},s_{2}%
,\ldots,s_{N}$ is of the form
\begin{equation}
U^{\prime}\cdot\operatorname*{diag}(s_{1},s_{2},\ldots,s_{N})\cdot U
\end{equation}
with $U$ orthogonal ($\operatorname*{diag}$ denotes a diagonal matrix), we
find that the general form of a matrix with singular values $s_{1}%
,s_{2},\ldots,s_{N}$ is
\begin{equation}
T=U_{1}\cdot\operatorname*{diag}(s_{1},s_{2},\ldots,s_{N})\cdot U_{2},\qquad
U_{1}{\text{ and }}U_{2}{\text{ orthogonal}}. \label{eq:sing}%
\end{equation}

\subsection{Square matrix with entries independently $\sim{\mathcal{N}}$ and
the uniform distribution on orthogonals\label{sbsc:Mat}}

Take a random $N\times N$ matrix $Y$ with entries $Y_{i,j}$ i.i.d.\ $\sim
{\mathcal{N}}$. If we polarly decompose the random $Y$ as per Appendix
\ref{sbsc:Decom}, from either side, then the orthogonal part will be
distributed uniformly (i.e.,\ by Haar's measure) on the orthogonal group. This
follows from the fact that, by the symmetries of the above distribution of
$Y$, it is invariant under multiplying the random matrix on the right or left
by a fixed orthogonal matrix. So, we have here a vehicle to get this uniform
distribution. For a general excellent text on random matrices consult
\cite{edelman2005}.

For the positive semidefinite part we have to check, say, $Y^{\prime}\cdot Y$
for our random matrix $Y$. But if $u$ is any vector then, by the symmetries of
the distribution of the random $Y$, $Yu$ is distributed like $\Vert u\Vert$
times $Y\cdot(1,0,\ldots.0)$ -- i.i.d.\ $\sim{\mathcal{N}}$ entries, thus,
with norm concentrated near $\Vert u\Vert\cdot\sqrt{N}$, with relative
deviation $O(1/\sqrt{N})$. But, \emph{all the entries of $Y^{\prime}\cdot Y$
being discernible from $\langle Y^{\prime}\cdot Yu,u\rangle$ if we take as $u$
elements of the standard basis $e_{i}=(0,\ldots,0,1,0\ldots,0)$ and sums of
two of these}, we obtain the following conclusion.

\begin{conclusion}
\label{conc:Mat} $(1/N)Y^{\prime}\cdot Y$ (and likewise $(1/N)Y\cdot
Y^{\prime}$) is concentrated near ${\mathbf{1}}$ (${\mathbf{1}}$ denotes the
identity matrix), with relative deviation $O(1/N)$.
\end{conclusion}

In other words, the random $Y$ is, with almost full probability, very near
$\sqrt{N}$ times an orthogonal matrix. Indeed. to check how orthogonal
$(1/\sqrt{N})Y$ is, note that the amount it distorts the inner product between
unit vectors $u$ and $v$ is
\begin{equation}
(1/N)\langle Yu,Yv\rangle-\langle u,v\rangle=\langle((1/N)Y^{\prime}\cdot
Y-{\mathbf{1}})u,v\rangle=O(1/N).
\end{equation}

\subsection{The action of a linear operator in a high-dimensional
space\label{sbsc:LinOp}}

Consider an $N\times N$ matrix $T$ with given singular values $s_{1}%
,s_{2},\ldots,s_{N}$ as in (\ref{eq:sing}). Let $T$ act on a unit vector $u$
with direction uniformly distributed over $S^{N-1}$. By (\ref{eq:sing}) this
is distributed, up to an orthogonal \textquotedblleft
rotation\textquotedblright\ of the space, the same as $S=\operatorname*{diag}%
(s_{1},s_{2},\ldots,s_{N})$ acting on such a vector.

But by Section \ref{sbsc:ConcMeas}, that would be almost as $S$ applied to
$(1/\sqrt{N})x$,\thinspace\thinspace$x$ with coordinates i.i.d.\ $\sim
{\mathcal{N}}$, which is, of course, a vector with independent coordinates but
the $j$-th coordinate distributed as $(1/\sqrt{N})s_{j}$ times ${\mathcal{N}}$.

Now, similarly to what we had in Section \ref{sbsc:ConcMeas}, the square of
the norm of $S\cdot(1/\sqrt{N})x$, which is $(1/N)\sum_{j=1}^{N}s_{j}^{2}%
x_{j}^{2}$ has mean
\begin{equation}
(1/N)\sum_{j=1}^{N}s_{j}^{2}=\left(  \Vert(s_{1},s_{2},\ldots,s_{N})\Vert
_{2}^{(\pi)}\right)  ^{2},
\end{equation}
around which it is concentrated -- its standard deviation being
\begin{equation}
\sigma\cdot\sqrt{(1/N^{2})\sum_{j=1}^{N}s_{j}^{4}}=(1/\sqrt{N})\sigma
\cdot\left(  \Vert(s_{1},s_{2},\ldots,s_{N})\Vert_{4}^{(\pi)}\right)  ^{2},
\end{equation}
where $\sigma$ is the standard deviation for $x^{2}$ when $x\sim{\mathcal{N}}%
$, namely,
\begin{equation}
\sigma=\sqrt{\frac{1}{\sqrt{2\pi}}\int(x^{2}-1)^{2}\operatorname{exp}%
(-\textstyle{\frac{1}{2}}x^{2})\,dx}=\sqrt{2}.
\end{equation}
By Appendix \ref{sbsc:Norm}, the relative deviation is, thus, expected, with
almost full probability, to be $O(1/\sqrt{N})$. Note that since $T=U_{1}%
\cdot\operatorname*{diag}(s_{1},s_{2},\ldots,s_{N})\cdot U_{2}$, the value
around which the norm of $T$ applied to a uniformly distributed unit vector is
concentrated is
\begin{equation}
\Vert(s_{1},s_{2},\ldots,s_{N})\Vert_{2}^{(\pi)}=(1/\sqrt{N})\Vert S\Vert
_{HS}=(1/\sqrt{N})\Vert T\Vert_{HS}.
\end{equation}
Dividing $T$ by that value, we get a $T$ with $(1/\sqrt{N})\Vert T\Vert
_{HS}=1$ which, with almost full probability, will approximately preserve the
norm. How \textquotedblleft orthogonal\textquotedblright\ will it be? Let us
see how $S$ distorts the inner product between $(1/\sqrt{N})x$ and
$(1/\sqrt{N})y$, all $2N$ coordinates of $x$ and $y$ i.i.d.\ $\sim
{\mathcal{N}}$. The mean of the square of the difference
\begin{equation}
\langle S(1/\sqrt{N})x,S(1/\sqrt{N})y\rangle-\langle(1/\sqrt{N})x,(1/\sqrt
{N})y\rangle
\end{equation}
is
\begin{align}
&  (1/N^{2}){\mathbb{E}}\left(  \sum_{j=1}^{N}s_{j}^{2}x_{j}y_{j}-\sum
_{j=1}^{N}x_{j}y_{j}\right)  ^{2}=(1/N^{2}){\mathbb{E}}\left(  \sum_{j=1}%
^{N}(s_{j}^{2}-1)x_{j}y_{j}\right)  ^{2}\nonumber\\
&  =(1/N)\left(  (1/N)\sum_{j=1}^{N}(s_{j}^{4}-2s_{j}^{2}+1)\right)
\nonumber\\
&  =(1/N)\left(  \left(  \Vert(s_{1},s_{2},\ldots,s_{N})\Vert_{4}^{(\pi
)}\right)  ^{4}-2\left(  \Vert(s_{1},s_{2},\ldots,s_{N})\Vert_{2}^{(\pi
)}\right)  ^{2}+1\right) \nonumber\\
&  =(1/N)\left(  \left(  \Vert(s_{1},s_{2},\ldots,s_{N})\Vert_{4}^{(\pi
)}\right)  ^{4}-1\right)  .
\end{align}
Consequently, $T$ is orthogonal, with almost full probability, up to
$O(1/\sqrt{N})$. This leads to the following conclusion.

\begin{conclusion}
\label{conc:ActMat} An $N\times N$ matrix $T$ with given singular values
$s_{1},s_{2},\ldots,s_{N}$, acting on a high-dimensional $E^{N}$, would be
expected to act, with almost full probability, as
\begin{equation}
\Vert(s_{1},s_{2},\ldots,s_{N})\Vert_{2}^{(\pi)}=(1/\sqrt{N})\Vert T\Vert_{HS}%
\end{equation}
times an orthogonal matrix, up to a relative deviation $O(1/\sqrt{N})$.
\end{conclusion}

\begin{remark}
Now we address a seeming mystery\ raised by Conclusion \ref{conc:ActMat}. That
conclusion seems to require that $(1/\sqrt{N})$ times the Hilbert-Schmidt norm
of the product of two matrices with singular values $(s_{1},s_{2},\ldots
,s_{N})$ and $(s_{1}^{\prime},s_{2}^{\prime}\ldots,s_{N}^{\prime}),$
respectively, be equal to the product of the same for the factors, i.e.,\ to
$\Vert(s_{1},s_{2},\ldots,s_{N})\Vert_{2}^{(\pi)}\cdot\Vert(s_{1},s_{2}%
,\ldots,s_{N})\Vert_{2}^{(\pi)}$, up to relative deviation $O(1/\sqrt{N})$. Is
that so?

Note that, by (\ref{eq:sing}), the $HS$-norm of the product is that of
$SUS^{\prime}$ where $S=\operatorname*{diag}(s_{1},s_{2},\ldots,s_{N}%
)$,\thinspace$S^{\prime}=\operatorname*{diag}(s_{1}^{\prime},s_{2}^{\prime
}\ldots,s_{N}^{\prime})$ and $U$ is orthogonal. So, if, up to an $O(1/\sqrt
{N})$ relative deviation, we model $U$ as $(1/\sqrt{N})Y$, $Y=\left(
Y_{i,j}\right)  _{i,j}$ as in Appendix \ref{sbsc:Mat}, then $SUS^{\prime
}=\left(  (1/\sqrt{N})s_{i}Y_{i,j}s_{j}^{\prime}\right)  _{i,j}$. The square
of $(1/\sqrt{N})$ times its $HS$-norm is $(1/N^{2})\sum_{i,j=1,1}^{N,N}%
s_{i}^{2}Y_{i,j}^{2}s_{j}^{\prime2}$, with mean indeed equal to the square of
$\Vert(s_{1},s_{2},\ldots,s_{N})\Vert_{2}^{(\pi)}\cdot\Vert(s_{1}^{\prime
},s_{2}^{\prime}\ldots,s_{N}^{\prime})\Vert_{2}^{(\pi)}$, and with standard
deviation $\sigma\cdot1/N$ times the square of $\Vert(s_{1},s_{2},\ldots
,s_{N})\Vert_{4}^{(\pi)}\cdot\Vert(s_{1}^{\prime},s_{2}^{\prime}\ldots
,s_{N}^{\prime})\Vert_{4}^{(\pi)}$.
\end{remark}

\subsection{The rotation effected by an operator and by a product of operators
in a high-dimensional space\label{sbsc:RotMat}}

Let $T$ be an an $N\times N$ matrix, and consider \emph{the amount of rotation
between $v$ and $Tv$}. The square of the distance between these vectors, both
normalized to norm $1$ will be
\begin{align}
&  \left\Vert \dfrac{Tv}{\Vert Tv\Vert}-\dfrac{v}{\Vert v\Vert}\right\Vert
^{2}=\left\langle \dfrac{Tv}{\Vert Tv\Vert}-\dfrac{v}{\Vert v\Vert},\dfrac
{Tv}{\Vert Tv\Vert}-\dfrac{v}{\Vert v\Vert}\right\rangle \nonumber\\
=  &  2-\dfrac{\langle Tv,v\rangle+\langle v,Tv\rangle}{\Vert Tv\Vert\Vert
v\Vert}=2\left(  1-\dfrac{\langle T^{(sym)}v,v\rangle}{\Vert Tv\Vert\Vert
v\Vert}\right)  ,
\end{align}
where $T^{(sym)}:=\textstyle{\frac{1}{2}}(T+T^{\prime})$ is the
\emph{symmetric part} of $T$. Note that ${\text{tr}}\,T^{(sym)}%
=\operatorname*{tr}\,T$. So, we are led to investigate the inner product
$\langle Ax,x\rangle$ for $A$ symmetric. Let $(s_{1},s_{2},\ldots,s_{N})$ be
its eigenvalues, then $A=U^{\prime}SU$ where $S=\operatorname*{diag}%
(s_{1},s_{2},\ldots,s_{N})$ and $U$ orthogonal. As we did above, we take
$v=(1/\sqrt{N})x$, and\thinspace\thinspace$x$ with coordinates i.i.d.\ $\sim
{\mathcal{N}}$. Then
\begin{equation}
\left\langle A(1/\sqrt{N})x,(1/\sqrt{N})x\right\rangle =(1/N)\langle
U^{\prime}SUx,x\rangle=(1/N)\langle SUx,Ux\rangle.
\end{equation}
But, $Ux$ being distributed like $x$, this will have the same distribution as%
\begin{equation}
(1/N)\langle Sx,x\rangle=(1/N)\sum_{j=1}^{N}s_{j}x_{j}^{2}, \label{eq:S}%
\end{equation}
which has mean $(1/N)\sum_{j=1}^{N}s_{j}=(1/N)\operatorname*{tr}\,A$ and
$(1/\sqrt{N})\sigma\Vert(s_{1},\ldots,s_{N})\Vert_{2}^{(\pi)}$ is its standard
deviation. Of course, if $A$ is positive semidefinite then the $s_{\ell}\geq0$
and the above mean is $\Vert(s_{1},s_{2},\ldots,s_{N})\Vert_{1}^{(\pi)}$. This
leads to the following conclusion.

\begin{conclusion}
\label{conc:RotMat} For $T$ with symmetric part with eigenvalues $(s_{1}%
,s_{2},\ldots,s_{N})$, the square of the distance between $v$ and $Tv$, both
normalized to norm $1$, is, with almost full probability, near (with deviation
$O(1/\sqrt{N})$)%
\begin{equation}
2\left(  1-\dfrac{(1/N)\operatorname*{tr}\,T}{(1/\sqrt{N})\Vert T\Vert_{HS}%
}\right)  =2\left(  1-\dfrac{(1/N)\operatorname*{tr}\,T}{\Vert(s_{1}%
,s_{2},\ldots,s_{N})\Vert_{2}^{(\pi)}}\right)  ,
\end{equation}
which, if the symmetric part of $T$ is positive-semidefinite, is equal to%
\begin{equation}
2\left(  1-\dfrac{\Vert(s_{1},s_{2},\ldots,s_{N})\Vert_{1}^{(\pi)}}%
{\Vert(s_{1},s_{2},\ldots,s_{N})\Vert_{2}^{(\pi)}}\right)  .
\end{equation}

\end{conclusion}

The next discussion will lead to a conclusion about a product $A_{M}%
A_{M-1}\cdots A_{1}$ of a sequence of \emph{symmetric} operators. Consider a
symmetric $A=U^{\prime}\operatorname*{diag}(s_{1},s_{2},\ldots,s_{N})U$ with
given $s_{1},s_{2},\ldots,s_{N}$. Take $U$ uniformly distributed on the
orthogonal group, which we model up to a relative deviation $O(1/\sqrt{N})$ by
$1/\sqrt{N}\cdot Y$, $Y=\left(  Y_{i,j}\right)  _{i,j}$ as in Appendix
\ref{sbsc:Mat}. Then%
\begin{equation}
A=USU^{\prime}\approx\left(  (1/N)\sum_{k=1}^{N}Y_{k,i}s_{k}Y_{k,j}\right)
_{i,j}.\label{eq:A}%
\end{equation}
Consequently,%
\begin{equation}
{\mathbb{E[}}A]\approx(1/N)\left(  \sum_{k=1}^{N}s_{k}\right)  \cdot
{\mathbf{1}}=(1/N)\operatorname*{tr}\,A\cdot{\mathbf{1}}.
\end{equation}
But here we cannot say, as we did in previous cases, that, with high
probability, $A$ would be near that average -- indeed they cannot be
\textquotedblleft near\textquotedblright\ since the eigenvalues of the average
are all $(1/N)\operatorname*{tr}\,A$ while those of $A$ are with full
probability $s_{1},s_{2},\ldots,s_{N}$.

To apply the considerations of Appendix \ref{sbsc:Sphere}, where one relies on
a Markov chain employing uniform distribution on spheres, we inquire what is
the distribution of $Av_{0}$, and of the difference vector $\left(
\dfrac{Av_{0}}{\Vert Av_{0}\Vert}-\dfrac{v_{0}}{\Vert v_{0}\Vert}\right)  $
for a fixed $v_{0}$, with $A$ random as in (\ref{eq:A}) above. To fix matters,
assume $v_{0}=(1,0,\ldots,0)$. As above, we have $Av_{0}=U^{\prime}SUv_{0}$.
where $S:=\operatorname*{diag}(s_{1},s_{2},\ldots,s_{N})$. Or, with $U$
replaced by $1/\sqrt{N}\cdot Y$, $Av_{0}\approx(1/N)Y^{\prime}SYv_{0}$. Write
$Y$ as $(w,Z)$ where $w$ is the $N\times1$ matrix which is the first column of
$Y,$ and $Z$ is the $N\times(N-1)$ matrix of the other columns. Then, with
$v_{0}=(1,0,\ldots,0)$, $Yv_{0}=w$, and
\begin{equation}
(1/N)Y^{\prime}SYv_{0}=(1/N)\left(
\begin{array}
[c]{c}%
w^{\prime}\\
Z^{\prime}%
\end{array}
\right)  Sw=(1/N)\left(
\begin{array}
[c]{c}%
w^{\prime}Sw\\
Z^{\prime}Sw
\end{array}
\right)  .
\end{equation}
Note that the random $Z$ and $w$ are \emph{independent}. $Z$ is an
$N\times(N-1)$ matrix with entries i.i.d.\ $\sim{\mathcal{N}}$, and by the
symmetries of this distribution\thinspace(as in Appendices \ref{sbsc:Sum} and
\ref{sbsc:Mat}), $(1/N)Z^{\prime}Sw$ is distributed like $(1/N)\Vert Sw\Vert$
times an $(N-1)$ vector with entries i.i.d.\ $\sim{\mathcal{N}}$ -- near
$(1/\sqrt{N})\Vert Sw\Vert$ times a vector uniformly distributed on $S^{N-2}$.
And, as in Appendix \ref{sbsc:LinOp}, $(1/\sqrt{N})\Vert Sw\Vert$ is
concentrated near $\Vert(s_{1},s_{2},\ldots,s_{N})\Vert_{2}^{(\pi)}$. As for
$(1/N)w^{\prime}Sw$ -- it is just (\ref{eq:S}) -- its value is concentrated
near $(1/N)\operatorname*{tr}\,A$, which if $A$ is positive-semidefinite is
equal to $\Vert(s_{1},s_{2},\ldots,s_{N})\Vert_{1}^{(\pi)}$.

To conclude, the value our random $A$ gives to $(1,0,\ldots,0)$ is a vector
with first coordinate near $(1/N)\operatorname*{tr}\,A$ -- which if $A$ is
positive-semidefinite is $\Vert(s_{1},s_{2},\ldots,s_{N})\Vert_{1}^{(\pi)}$,
and other coordinates forming a vector near the product of $\Vert(s_{1}%
,s_{2},\ldots,s_{N})\Vert_{2}^{(\pi)}$ with a vector uniformly distributed on
$S^{N-2}$. Its norm is $\Vert(s_{1},s_{2},\ldots,s_{N})\Vert_{2}^{(\pi)}$ up
to a deviation $O(1/N)$, and one obtains values agreeing with the above for
$\langle Ax,x\rangle$ and the square of the distance between $v$ and $Av$,
both normalized.

In particular, for $A$ symmetric, employing uniform distribution on spheres in
the Markov chain as in Appendix \ref{sbsc:Sphere} and Conclusion
\ref{conc:SphereVectors} is vindicated. Therefore, for a product of a sequence
of symmetric operators $A_{M}A_{M-1}\cdots A_{1}$, we may apply Conclusion
\ref{conc:SphereVectors} to obtain the following conclusion.

\begin{conclusion}
\label{conc:ProdMat} For a product $A_{M}A_{M-1}\cdots A_{1}$, of a sequence
of \emph{symmetric} operators $A_{i}$ with given eigenvalues $(s_{1}%
^{(i)},s_{2}^{(i)},\ldots,s_{N}^{(i)})$, the square of the distance between
$v$ and $A_{M}A_{M-1}\cdots A_{1}v$, both normalized to norm $1$, is, with
almost full probability, near (with deviation $O(\sqrt{M}/\sqrt{N})$)%
\begin{equation}
2\left(  1-\prod_{i=1}^{M}\dfrac{(1/N)\operatorname*{tr}\,A_{i}}{(1/\sqrt
{N})\Vert A_{i}\Vert_{HS}}\right)  =2\left(  1-\prod_{i=1}^{M}\dfrac
{(1/N)\operatorname*{tr}A_{i}}{\Vert(s_{1}^{(i)},s_{2}^{(i)},\ldots
,s_{N}^{(i)})\Vert_{2}^{(\pi)}}\right)  ,
\end{equation}
which, if for all $i,$ $A_{i}$ is positive semidefinite, is equal to%
\begin{equation}
2\left(  1-\prod_{i=1}^{M}\dfrac{\Vert(s_{1}^{(i)},s_{2}^{(i)},\ldots
,s_{N}^{(i)})\Vert_{1}^{(\pi)}}{\Vert(s_{1}^{(i)},s_{2}^{(i)},\ldots
,s_{N}^{(i)})\Vert_{2}^{(\pi)}}\right)  .\label{eq:ProdMat}%
\end{equation}

\end{conclusion}

\begin{remark}
\label{remark:ProdMat} Note that \emph{if the $A_{i}$ are }positive
semidefinite, the value (\ref{eq:ProdMat}) around which the square of the
distance between the points on $S^{N-1}$ is concentrated, is $\leq2$, that is,
the distance is $\leq\sqrt{2}$ and the angle between the vectors is
$\leq90^{\circ}$.\bigskip
\end{remark}

\textbf{Acknowledgments.} We thank two anonymous reviewers for their
constructive comments. This work was supported by research grant no. 2013003
of the United States-Israel Binational Science Foundation (BSF).\bigskip

\textbf{Conflict of Interest}. The authors declare that they have no conflict
of interest.

\end{document}